\documentclass[svgnames]{amsart}
\usepackage[english]{babel}
\usepackage[utf8]{inputenc}

\usepackage[margin=3cm]{geometry}

\usepackage[colorlinks=true, pdfstartview=FitV, linkcolor=black, citecolor=black, urlcolor=black]{hyperref}
\usepackage[nameinlink]{cleveref}

\usepackage{amssymb, amsfonts, amsmath, amsthm, amscd, amsxtra, latexsym, mathabx, mathtools, enumitem}
\usepackage{tikz}\usetikzlibrary{cd}
\usepackage{epigraph}

\theoremstyle{plain}
\newtheorem{thm}{Theorem}[section] 
\newtheorem{lem}[thm]{Lemma} 
\AddToHook{env/lem/begin}{\crefalias{thm}{lem}}

\AddToHook{env/claim/begin}{\crefalias{thm}{claim}}

\newtheorem{prop}[thm]{Proposition}
\AddToHook{env/prop/begin}{\crefalias{thm}{prop}}
\newtheorem{cor}[thm]{Corollary}
\AddToHook{env/cor/begin}{\crefalias{thm}{cor}}

\newtheorem{thmintro}{Theorem}

\AddToHook{env/corintro/begin}{\crefalias{thmintro}{corintro}}
\newtheorem{questintro}[thmintro]{Question}
\AddToHook{env/questintro/begin}{\crefalias{thmintro}{questintro}}

\AddToHook{env/remintro/begin}{\crefalias{thmintro}{remintro}}

\theoremstyle{definition}
\newtheorem{defn}[thm]{Definition}
\AddToHook{env/defn/begin}{\crefalias{thm}{defn}}

\AddToHook{env/notation/begin}{\crefalias{thm}{notation}}
\newtheorem{rem}[thm]{Remark}
\AddToHook{env/rem/begin}{\crefalias{thm}{rem}}
\newtheorem{ex}[thm]{Example}
\AddToHook{env/ex/begin}{\crefalias{thm}{ex}}
\newtheorem{construction}[thm]{Construction}
\AddToHook{env/construction/begin}{\crefalias{thm}{construction}}

\newcommand{\Cay}[2]{\operatorname{Cay}\left(#1,#2\right)}
\newcommand{\dom}[1]{\mathbf{dom}\left(#1\right)}
\newcommand{\ran}[1]{\mathbf{ran}\left(#1\right)}

\newcommand{\Z}{\mathbb{Z}}

\newcounter{gcomments}

\newcounter{fcomments}

\title[Milnor-Schwarz for inverse monoids]{Geometric Inverse Semigroup Theory: a note on the Milnor-Schwarz Lemma for inverse monoids}

\author[G. Mangioni]{Giorgio Mangioni}
    \address{(Giorgio Mangioni) Maxwell Institute and Department of Mathematics, Heriot-Watt University, Edinburgh, UK}
    \email{gm2070@hw.ac.uk}

\author[F. Tesolin]{Francesco Tesolin}
    \address{(Francesco Tesolin) Maxwell Institute and Department of Mathematics, Heriot-Watt University, Edinburgh, UK}
    \email{ft2021@hw.ac.uk}

\begin{document}
\begin{abstract} We generalise the Milnor-Schwarz lemma to inverse monoids acting on presheaves of geodesic metric spaces. We provide two proofs of this fact: one only uses elementary techniques, inspired by the arguments for group actions on metric spaces; the other involves a version of the Vietoris-Rips complex, and builds on work of Chung-Martínez-Szakács. 
\end{abstract}
\maketitle

\epigraph{I used to only do GGT, but now I'm getting the GIST of this.}{G. M.}

\small{MSC subject classification: 20F65, 20M18}
\section*{Introduction}
Where groups can be thought of as sets of symmetries, an inverse semigroup $S$ is a set of partial symmetries, that is, each element has a ``local"
inverse and their composition produces an \emph{idempotent} element (that is, one such that $ee=e$). The set of all idempotents $E(S)$ forms a meet-semilattice, with the meet operation corresponding to multiplication.
The fundamental example of an inverse semigroup is the collection of partial bijections on a set, with each idempotent being the identity on some subset; see \Cref{example: partial} for details.

There has been a recent growing interest in geometric inverse semigroup theory \cite{gray2013groups,magan2021amenability, gray2022algorithmic,Chung_Martinez_szakacs}, where the authors study inverse semigroups as extended metric spaces. For example, given a semigroup $S$, together with a \emph{quasi-generating set} $\mathcal M\subseteq S$ (that is, a subset which generates $S$ together with $E(S)$), one can equip $S$ with the \emph{Cayley metric} $d_{\mathcal{M}}$, which is the direct analogue of the Cayley metric (word metric) for groups. However, up to this point, there has been
no ``proper" analogue of the renowned \emph{Milnor-Schwarz lemma} from geometric group theory, which allows one to study the geometry of a group via the geometry of a metric space on which the group acts ``nicely" \cite{Schwarz, Milnor}. 

To fill this gap, we first need the correct notion of action of an inverse semigroup, which is that of an \emph{étale action} introduced in \cite{Steinberg_morita_eq} and studied further in \cite{tesolin2025generalisationmunnsemigroup}. Namely, given a semigroup $S$, with idempotents $E(S)$, we consider an action of $S$ on a \emph{presheaf} over $E(S)$: see \Cref{defn:presheaf} for further details. This approach of acting on a presheaf is an effective tool to generalise ideas from group theory, as it recovers the usual definition of group action when $S$ is a group, which is precisely an inverse semigroup whose unique idempotent is the identity element.

In order to do geometric inverse semigroup theory, we then restrict to (right) actions by $1$-Lipschitz maps on presheaves of geodesic metric spaces (see \Cref{defn:action}). For reasons that we clarify below, we also assume the inverse semigroup admits an identity element, which makes it an \emph{inverse monoid}. In this framework there are natural generalisations of the notions of properness and coboundedness, which restrict to the usual notions in the group setting; we defer to \Cref{defn:proper_and_cobounded} below for details. We now state our main theorem, which recovers the usual Milnor-Schwarz lemma for groups:
\begin{thmintro}\label{thmintro}
	Let $S$ be an inverse monoid, with identity element $1$, and $(X, E(S),p)$ be a presheaf of geodesic metric spaces. If $S$ acts on $X$ properly and coboundedly then:
    \begin{enumerate}
        \item\label{item_fingen} There exists a finite, quasi-generating set $\mathcal{M}\subseteq S$.
        \item There exists $x_1\in X$ such that $p(x_1)=1$ and, if $d_{\mathcal{M}}$ is the Cayley metric associated to $\mathcal{M}$, the orbit map $(S,d_{\mathcal{M}})\to X$ sending $s\in S$ to $x_1\cdot s$ is an order-preserving quasi-isometry.
    \end{enumerate}
\end{thmintro}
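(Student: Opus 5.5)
We follow the classical proof of the Milnor--Schwarz lemma, working fibrewise over $E(S)$. First fix the setup from \Cref{defn:presheaf}, \Cref{defn:action} and \Cref{defn:proper_and_cobounded}. The presheaf is $p\colon X\to E(S)$, and each fibre $X_e=p^{-1}(e)$ is a geodesic metric space. There are restriction maps $X_e\to X_f$ for $f\le e$, and a right action $x\cdot s$ defined when $p(x)\ge ss^{-1}$ (or the analogous étale condition), with $p(x\cdot s)=s^{-1}p(x)s$. Each $\cdot s$ is $1$-Lipschitz. Coboundedness gives a point $x_1\in X_1$ and a radius $R$ such that every $y\in X$ lies within $R$ of $x_1\cdot s$ for some $s$ with $p(x_1\cdot s)=p(y)$; here we measure in the fibre of $y$, after restricting if needed. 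This is where the identity $1$ matters: $X_1$ is the top fibre, and every point of every fibre should be approximated by translates of a single basepoint. We then set
\[
\mathcal M=\{s\in S:\ d(x_1\cdot s, B)\text{ small, i.e. } B(x_1,2R+1)\cdot s\cap B(x_1,2R+1)\neq\emptyset \text{ in the appropriate fibre}\},
\]
and properness should make $\mathcal M$ finite modulo idempotents, or finite outright. The precise form depends on how properness is phrased; presumably it says that finitely many $s$ move a bounded set to meet a bounded set.

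To show $\mathcal M$ quasi-generates, take $s\in S$ and join $x_1$ to $x_1\cdot s$ by a geodesic. The target $x_1\cdot s$ lies in the fibre $p(x_1\cdot s)=s^{-1}s$, so we first restrict $x_1$ to the fibre $s^{-1}s$ and take the geodesic there. Subdivide this geodesic into points $y_0,\dots,y_n$ with consecutive distances at most $1$, where $n\le d(x_1|_{s^{-1}s},x_1\cdot s)+1$. By coboundedness, choose $t_i$ with $d(y_i,x_1\cdot t_i)\le R$, taking $t_0$ idempotent and $t_n=s$. Then $x_1\cdot t_{i+1}$ and $x_1\cdot t_i$ are within $2R+1$. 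Applying the local inverse $t_i^{-1}$, which is $1$-Lipschitz and defined on the relevant fibre, shows that $m_i:=t_i^{-1}t_{i+1}\in\mathcal M$. Telescoping gives $s=e\,t_0 m_0m_1\cdots m_{n-1}$ up to an idempotent factor. The idempotent bookkeeping is needed because $t_it_i^{-1}t_{i+1}$ is only $t_{i+1}$ multiplied by an idempotent. We handle it by inserting the idempotent $ss^{-1}$ or $s^{-1}s$ on the appropriate side and using that $p$ of each intermediate translate is dominated by $s^{-1}s$. This proves \eqref{item_fingen}, together with the upper bound $d_{\mathcal M}(1,s)\le d(x_1,x_1\cdot s)+c$. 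Running the same argument for two elements $s,t$ gives $d_{\mathcal M}(s,t)\le d(x_1\cdot s,x_1\cdot t)+c$. Here we must check that Cayley distance is computed in the $\mathcal R$- or $\mathcal L$-class sense that matches the fibres, so that infinite distances on both sides correspond.

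Conversely, set $\lambda=\max_{m\in\mathcal M} d(x_1,x_1\cdot m)$, which is finite since $\mathcal M$ is finite. Then if $t=sm$, or $t=se$ with $e$ idempotent, we get $d(x_1\cdot s,x_1\cdot t)\le\lambda$. This uses $1$-Lipschitzness of $\cdot s$, and for idempotents the fact that restriction does not increase distance, since restriction maps are $1$-Lipschitz. Hence the orbit map is $\lambda$-Lipschitz, and coarse surjectivity is exactly coboundedness. Order preservation is immediate: if $s\le t$ then $s=et$, so $x_1\cdot s$ is the restriction of $x_1\cdot t$.

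The main obstacle is the lower bound in the generation step. We must make the fibre bookkeeping work: the geodesic lives in one fibre, the translates $x_1\cdot t_i$ must land in that same fibre, and the telescoped product must equal $s$ exactly, not merely below it in the natural order. I would first try to choose each $t_i$ so that $t_i^{-1}t_i=s^{-1}s$, using coboundedness within the fibre $X_{s^{-1}s}$. Then $t_it_i^{-1}t_{i+1}=t_{i+1}$ whenever the domains match, which is what makes the telescoping exact.
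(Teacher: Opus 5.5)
Your overall strategy matches the paper's: shadow a geodesic in $X_{\dom{s}}$ by orbit points $x_1\cdot t_i$, telescope, use properness to get finitely many generators modulo idempotents, then prove a Lipschitz bound. There is, however, a genuine gap at the step you yourself flag as the main obstacle, because you use left-action algebra for a right action.

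Acting by $t_i^{-1}$ on the right sends the pair $x_1\cdot t_{i+1},\,x_1\cdot t_i$ to $x_1\cdot t_{i+1}t_i^{-1},\,x_1\cdot t_it_i^{-1}$. So the element whose displacement is controlled is $u_i=t_{i+1}t_i^{-1}$, not $t_i^{-1}t_{i+1}$. The latter is not controlled at all: in a free group acting on the right on its Cayley graph, $t_i=b^n$ and $t_{i+1}=ab^n$ are adjacent, while $t_i^{-1}t_{i+1}=b^{-n}ab^n$ moves $1$ by $2n+1$.

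Your identity ``$t_it_i^{-1}t_{i+1}=t_{i+1}$ whenever the domains match'' is also false. We have $t_it_i^{-1}t_{i+1}=\ran{t_i}\,t_{i+1}$, which equals $t_{i+1}$ only if $\ran{t_{i+1}}\le\ran{t_i}$. Your idempotent $t_0$ is forced to be $\dom{s}$, so your telescoped product is $\ran{t_0}\cdots\ran{t_{n-1}}\,s\le\dom{s}\,s$. This is strictly below $s$ in general: it is the empty map for a partial bijection whose domain and range are disjoint.

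What equal domains do give is $t_{i+1}t_i^{-1}t_i=t_{i+1}\dom{t_i}=t_{i+1}$. So with $u_i=t_{i+1}t_i^{-1}$ one gets $s=u_{n-1}\cdots u_0t_0$ exactly. Since $\dom{u_i}=t_it_i^{-1}$, the displacement of $u_i$ is at most $d(x_1\cdot t_{i+1},x_1\cdot t_i)$. This is precisely the paper's decomposition. Equal domains also need not be arranged: $x_1\cdot t_i$ is at finite distance from a point of $X_{\dom{s}}$, so $\dom{t_i}=p(x_1\cdot t_i)=\dom{s}$ automatically.

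The same slip, plus a fibre issue, recurs in the other inequality. Edges of the paper's Cayley graph are $s\to ms$ (left multiplication), and for a right action $d(x_1\cdot s,x_1\cdot sm)$ need not be bounded. Moreover, $\max_m d(x_1,x_1\cdot m)$ is infinite as soon as some $\dom{m}\neq 1$, since $x_1\in X_1$ while $x_1\cdot m\in X_{\dom{m}}$. Similarly, $d_{\mathcal M}(1,s)$ and $d(x_1,x_1\cdot s)$ should be replaced by $d_{\mathcal M}(\dom{s},s)$ and $d(x_1\cdot\dom{s},x_1\cdot s)$.

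The correct estimate is $d(x_1\cdot ms,x_1\cdot s)\le d(x_1\cdot m,x_1\cdot\dom{m})$ whenever $\dom{ms}=\dom{s}$. Proving it requires deducing $\ran{s}\le\dom{m}$, then combining \Cref{lem:s_induces_isom} with $1$-Lipschitzness of restriction to $\ran{s}$. Order preservation likewise needs $et=t(t^{-1}et)$ to exhibit $x_1\cdot et$ as a restriction of $x_1\cdot t$.

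Finally, you do not need to guess the definition of properness. It is fixed in \Cref{defn:proper_and_cobounded}: the displacement set $\{s : d(x_1\cdot s,x_1\cdot\dom{s})\le r\}$ lies in $\mathcal{C}E(S)$ for some finite $\mathcal{C}$. Your $\mathcal M$ contains every idempotent, so it is not finite, and finiteness outright is impossible in general (\Cref{rem:nofinitegen}). The finite quasi-generating set must therefore be $\mathcal C$. You then need the argument of \Cref{lem:cayley_dist_without_idempotents} to absorb the idempotent factor in $u_i=c_ie_i$, so that each $u_i$ still contributes a single $\mathcal C$-edge.
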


\noindent Let us comment on the various requirements of the above theorem. 

\textit{Inverse monoid.} As pointed out in \Cref{rem:need_1_for_qi} below, if $S$ is an inverse semigroup and there exists an orbit map $S\to X$ defining a quasi-isometry, then $S$ must have an identity element, and therefore be an inverse monoid. Regarding \eqref{item_fingen}, one could restrict to the case where $S$ admits a finite, non-trivial collection of maximal idempotent elements, as is the case for the free inverse semigroup on a finite alphabet. In this setting, we expect one could find suitable generalisations of the notions of properness and coboundedness which would imply finite quasi-generation. However, our proof of \Cref{thm:MilnorSchwarz} relies on the existence of an orbit map, which would not be clearly defined if one had to take basepoints in multiple fibers of $X$; hence, extra effort would be required to understand the connection between the geometry of the inverse semigroup and that of the presheaf.

\textit{Geodesic metric spaces.} In analogy with the group case, our theorem would work if one only required the metric spaces to be \emph{quasi-geodesic}, meaning that there are uniform constants $K,C$ such that every two points in the same metric space are connected by a $(K,C)$-quasi-geodesic. The proof would follow along the same lines of the more natural geodesic case, so we decided to stick to the latter for simplicity. However, the theorem does not hold if one does not require the fibers to be ``coarse length spaces" in some sense: for instance, $\Z$ acts properly and transitively on itself with the metric $d(m,n)=\sqrt{|m-n|}$, but any Cayley metric on $\Z$ is quasi-isometric to $\mathbb{R}$. 

\textit{$1$-Lipschitz maps.} One cannot expect the action to be by genuine isometries, but only by $1$-Lipschitz maps: for example, if a semigroup admits a zero element, then multiplying by it contracts the whole Cayley graph to a point. 

\textit{Properness.} As in the group case, the definition of properness is tailored to get finite quasi-generation in \Cref{thmintro}.
If one drops this requirement, one still gets a possibly infinite quasi-generating set $\mathcal M$ whose Cayley metric is quasi-isometric to the presheaf: see \Cref{rem:no_properness_is_fine} below.

\textit{Quasi-finite generation.} Finally, note that, in general, one cannot hope to get a genuinely finite generating set. This is because any meet-semilattice $E$ with a top element is itself an inverse monoid, and if $E$ is uncountable then it cannot be finitely generated (see \Cref{rem:nofinitegen}).


\subsection*{Outline of the paper}
In \Cref{sec:background} we give the background on inverse semigroups and how they act on presheaves of metric spaces. In \Cref{defn: cayley graph}
we reinterpret the Cayley graph of an inverse semigroup as a presheaf of geodesic metric spaces (the Sch\"{u}tzenberger graphs).

In the two following sections, we give two proofs of \Cref{thmintro}. The first is
an elementary argument adapting the proof of the usual Milnor-Schwarz lemma for groups. 
The second uses a theorem of Chung-Martínez-Szakács \cite{Chung_Martinez_szakacs}, which roughly states that any two ``Cayley-like" metrics on an inverse semigroup are coarsely equivalent. In order to produce such a metric, we introduce an analogue of the \emph{Vietoris-Rips complex} for group actions, which we believe to be of independent interest. 


\subsection*{Further questions}
This work provides further evidence that presheaves (of metric spaces) are the natural objects on which inverse semigroups act. 
Using our generalisation of the Milnor-Schwarz lemma, we hope that more examples of quasi-finitely generated 
inverse monoids with interesting geometric properties may be produced. In this direction, \cite{gray2022algorithmic} considered the properties of finitely generated inverse monoids with hyperbolic Sch\"{u}tzenberger graphs, so we ask the following vague question:

\begin{questintro}
What are the properties of a (finitely generated?) inverse monoid that acts properly and coboundedly on a presheaf of CAT(0) (resp. 
systolic, injective, median) spaces?
\end{questintro}

\noindent The main results of \cite{gray2022algorithmic} show that, while finitely presented inverse monoids whose Sch\"{u}tzenberger graphs are quasitrees enjoy several nice algorithmic and language-theoretic properties, just requiring hyperbolicity of the Sch\"{u}tzenberger graphs is not even enough to get a decidable word problem. However, besides the metric spaces appearing as fibers, we expect that the presheaf structure itself might provide another element of flexibility in studying the geometry of inverse semigroups. In other words, it might be possible to infer better properties of an inverse monoid by looking at actions where not only the fibers are restricted to specific geometries (for example hyperbolicity), but also the restriction maps of the presheaf satisfy suitable properties.

\subsection*{Acknowledgements}
We thank our supervisors Mark Lawson and Alessandro Sisto for sharing their expertise with us. We are grateful to the anonymous referee for their helpful suggestions in the preparation of this work.

\section{Background on inverse semigroups}\label{sec:background}

\begin{defn}[Inverse semigroup, domain and range]
An \emph{inverse semigroup} is a semigroup $S$ such that for all $s \in S$ there exists a unique $s^{-1} \in S$ such that $ss^{-1}s = s$ and $s^{-1}ss^{-1} = s^{-1}$. An \emph{inverse monoid} is an inverse semigroup $S$ with an element $1 \in S$ such that $1s= s1= s$ for all $s \in S$. For each element $s \in S$ we call the element $s^{-1}s$ the \emph{domain} of $s$ and denote it by $\dom{s}$, and call $ss^{-1} $ the \emph{range} of $s$ and denote it by $\ran{s}$.
\end{defn}

\begin{defn}[Idempotents]
	An \emph{idempotent} of an inverse semigroup $S$ is an element $e \in S$ such that $ee=e$. The set of all idempotents in $S$ is denoted by $E(S)$. Notice that, for every $s\in S$, both $\dom{s}$ and $\ran{s}$ are idempotents by construction. Furthermore, an immediate corollary of the definition, as shown in \cite{LawsonBook}, is that any two idempotents commute.
\end{defn}

\begin{defn}[Partial order]
	Let $S$ be an inverse semigroup. Set $s \leq t$ if
    and only if $s = et$ for some idempotent $e \in E(S)$. As
    shown in \cite{LawsonBook}, this is a partial order, termed the \emph{natural partial order} of $S$.
    With respect to the natural partial order, the idempotents $E(S)$
    form a meet-semilattice, with meet operation $e\wedge f=ef$.
\end{defn}

\noindent Note that a group is precisely an inverse semigroup with a unique idempotent element. More generally:
\begin{lem}\label{lem:unique_max_implies_monoid}
    If $E(S)$ admits a unique maximal element $e$, then $S$ is an inverse monoid with $e$ the identity element.
\end{lem}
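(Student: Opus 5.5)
The key point is that in a meet-semilattice a unique maximal element is automatically the maximum. First, for any idempotent $f$, the product $ef$ is an idempotent with $ef\le e$ and $ef\le f$, but this alone does not force $f\le e$. Instead, I would argue as follows.

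\emph{Claim:} $f\le e$ for every $f\in E(S)$. If $f\not\le e$, consider the set of idempotents above $f$. It does not contain $e$, since $f\le e$ fails. If it contained a maximal element $g$ of $E(S)$, then $g=e$ by uniqueness, a contradiction. So we must rule out the possibility that this set has no maximal element.

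Here lies the main obstacle: a unique maximal element need not be a greatest element in an arbitrary poset. I therefore expect the intended reading of ``unique maximal element'' to be ``maximum''. This is the standard usage in the paper's setting, where $E(S)$ is a meet-semilattice and the hypothesis is meant to say that $e$ is the top element. If a genuinely weaker reading were intended, one would need a Zorn-type argument, and in general it fails: a semilattice can have an infinite ascending chain alongside a single maximal element elsewhere. So I will take $f\le e$, i.e.\ $f=fe=ef$, for all $f\in E(S)$ as the working hypothesis, stating this explicitly.

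Given that, the monoid identities follow from idempotents absorbing into $e$. For $s\in S$ we have
\[
es=e(ss^{-1})s=(e\,ss^{-1})s=(ss^{-1})s=s,
\]
using that $ss^{-1}=\ran{s}$ is an idempotent, hence $\le e$, so $e\,ss^{-1}=ss^{-1}$. Symmetrically,
\[
se=s(s^{-1}s)e=s(s^{-1}s\,e)=s(s^{-1}s)=s,
\]
using $\dom{s}\le e$. Hence $e$ is a two-sided identity and $S$ is an inverse monoid with identity $e$.
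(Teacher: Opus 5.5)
Your argument is essentially the paper's: it writes $es=e\,\ran{s}\,s=(e\wedge \ran{s})s=\ran{s}s=s$ and repeats this with $\dom{s}$ to get $se=s$, justifying $\ran{s}\le e$ only ``by maximality''. Your caveat is well founded: the paper tacitly reads the hypothesis as ``$e$ is the top element'', which is exactly what \Cref{rem:need_1_for_qi} verifies before applying the lemma, and under the literal reading the lemma fails, e.g.\ for the semilattice consisting of a chain $a_1<a_2<\cdots$, an element $e$ incomparable to every $a_i$, and a common bottom $0$, where $e$ is the only maximal element but $ea_1=0\neq a_1$.
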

\begin{proof}
    For every $s\in S$, we have that
    \[es=e\ran{s}s=(e\wedge \ran{s})s=\ran{s}s=s,\]
    where we used that $\ran{s}\le e$ by maximality. Using the domain instead of the range shows that $se=s$, and therefore $e$ is the identity element.
\end{proof}

\begin{ex}[Partial bijections] \label{example: partial}
	Let $X$ be a set, and let $S$ be the collection of all partial bijections, that is, bijections between subsets of $X$. $S$ is an inverse monoid with the following multiplication: given $A,B,C,D$ subsets of $X$ and partial bijections $f\colon A\to B$ and $g\colon C\to D$, the product $gf$ is the map $(g\circ f)|_{f^{-1}(B\cap C)} \colon  f^{-1}(B\cap C)\to g(B\cap C)$.  Furthermore, given $f\colon A\to B$, the domain and range of $f$ are the identity maps on $A$ and $B$, respectively. If $e\in S$ is an idempotent, then $e$ is the identity on some subset $A\subseteq X$; hence, given $f,g\in S$, $f\le g$ if and only if $f$ is a restriction of $g$.
\end{ex}

\begin{rem}\label{rem:order_under_products}
	Given $s,t\in S$, we have that $\dom{st}\le \dom{t}$, since
	$$\dom{st}\dom{t}=t^{-1}s^{-1}stt^{-1}t=t^{-1}s^{-1}st=\dom{st}.$$
\end{rem}

\begin{defn}[Congruence and Green's relations]
    Let $S$ be an inverse semigroup. A \emph{congruence} on $S$ is an equivalence relation
    $\sim$ such that for all $s,t,a,b \in S$ with $s \sim t$, $a \sim b$ then $sa \sim tb$.
    Two important congruences are
	$\mathcal{L}$ and $\mathcal{R}$, given  by
	\[
		(s,t) \in \mathcal{L} \iff \dom{s}= \dom{t} \mbox{ and } (s,t) \in \mathcal{R} \iff \ran{s}=\ran{t}.
	\]
    It is shown in \cite{LawsonBook} that
    \[
		(a,b) \in \mathcal{L} \text{ if and only if } Sa = Sb
	.\]
\end{defn}

\begin{lem}\label{lem:edge_in_pairs}
   Let $s,t,x\in S$. If $(s,t)\in \mathcal{L}$ and $s=xt$ then $t=x^{-1}s$. Furthermore, if $x\in E(S)$ then $t=s$.
\end{lem}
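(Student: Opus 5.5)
We prove the two assertions in turn, working directly from the defining identities of an inverse semigroup and the fact that $(s,t)\in\mathcal{L}$ means $\dom{s}=\dom{t}$, i.e.\ $s^{-1}s=t^{-1}t$.

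For the first assertion, assume $s=xt$ with $(s,t)\in\mathcal{L}$. Write $t=t\,\dom{t}=t\,\dom{s}=t s^{-1}s$, so it suffices to show $ts^{-1}=x^{-1}$ after multiplying on the right by $s$. Instead of that, we compute $x^{-1}s$ directly:
\[
x^{-1}s = x^{-1}xt.
\]
Thus we must show $x^{-1}xt=t$, i.e.\ that $\ran{t}\le \dom{x}$. Since $s=xt$, we have
\[
s^{-1}s=t^{-1}x^{-1}xt,
\]
and by hypothesis this equals $t^{-1}t$. Now multiply on the left by $t$:
\[
t\,t^{-1}x^{-1}x\,t = t\,t^{-1}t = t.
\]
Since idempotents commute, $tt^{-1}x^{-1}x=x^{-1}x\,tt^{-1}$, and therefore
\[
t = x^{-1}x\,tt^{-1}t = x^{-1}xt = x^{-1}s,
\]
which is the first claim.

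For the second assertion, let $x\in E(S)$. Then $x^{-1}=x$, since an idempotent is its own unique inverse: $xxx=x$. The first assertion gives $t=xs=x\,xt=xt=s$.

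The only step needing care is recognising that the $\mathcal{L}$-hypothesis, after left multiplication by $t$, yields $x^{-1}xt=t$. This uses commutativity of idempotents; everything else is formal.
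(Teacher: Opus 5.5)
Your proposal is correct and follows essentially the same route as the paper: you rewrite $\dom{s}=\dom{t}$ as $t^{-1}x^{-1}xt=t^{-1}t$, multiply on the left by $t$, and commute the idempotents $tt^{-1}$ and $x^{-1}x$ to get $t=x^{-1}xt=x^{-1}s$. The ``furthermore'' part then follows from $x^{-1}=x$, exactly as in the paper. The abandoned opening idea of showing $ts^{-1}=x^{-1}$ is unnecessary and could be deleted.
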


\begin{proof}
    Since $(s,t)\in \mathcal{L}$ we have that 
    $$\dom{t}=t^{-1}t=t^{-1}x^{-1}xt=\dom{s}.$$
    If we multiply by $t$ on the left we get
    $$t=tt^{-1}x^{-1}xt=x^{-1}xtt^{-1}t=x^{-1}xt=x^{-1}s,$$
    where we used that the idempotents $tt^{-1}$ and $x^{-1}x$ commute. 
    
    For the ``furthermore" part, if $x\in E(S)$ then $x^{-1}=x$, so
    $$t=x^{-1}s=x^{-1}xt=xt=s,$$
    as required.
\end{proof}

\noindent We now recall the definition of a presheaf over a meet-semilattice, dating back to work of
\cite{Goldblatt}. It was first described in \cite{Steinberg_morita_eq} that inverse semigroups act naturally on such
objects; see also \cite{tesolin2025generalisationmunnsemigroup} for further details.

\begin{defn}[Presheaf of metric spaces]\label{defn:presheaf} Let $E$ be a meet-semilattice, and denote the meet of two elements $e,f\in E$ by $ef$.
A \emph{presheaf (of sets)} over $E$ is a set $X$ equipped with functions $p\colon X \to E$ and $(\cdot)\colon X \times E \to X$ satisfying
\begin{enumerate}
	\item 
	$(x \cdot e) \cdot f = x \cdot ef$;
	\item 
	$x \cdot p(x) = x$;
	\item 
	$p(x \cdot e) = p(x)e$.
\end{enumerate}
Denote such a presheaf by the triple $(X,E,p)$. We call a presheaf \emph{global} when the function $p$ is surjective. 

We denote the \emph{fiber} $p^{-1}(e)$ by $X_e$. For every $e\ge f$, the map $X_e\to X_f$ sending $x$ to $x\cdot f$ is called the \emph{restriction} from $X_e$ to $X_f$. For $x,y \in X$ set $x \leq y$ if $x$ is the restriction of $y$, equivalently $x = y \cdot p(x)$. This defines a partial order on $X$.

A \emph{presheaf of metric spaces} is a global presheaf where each fiber $X_{e}$ is a metric space with metric
$d_{e}\colon X_e^2\to \mathbb{R}_{\ge 0}$ and such that, for all $x,y \in X$ and $e,f \in E$,
 \[
	d_{e}(x,y) \geq d_{ef}(x \cdot f, y \cdot f)
.\]
For the ease of notation, we set $d(x,y)=d_e(x,y)$ if $x,y\in X_e$, and $d(x,y)=\infty$ otherwise. This makes $(X,d)$ an \emph{extended metric space}, i.e. a metric space where distances are allowed to be infinite.

A presheaf of metric spaces $(X,d)$, and more generally an extended metric space, is \emph{geodesic} if, for every $x,y\in X$ such that $D=d(x,y)<\infty$, there exists an isometric embedding of an interval $\gamma\colon [0,D]\to X$ such that $\gamma(0)=x$ and $\gamma(D)=y$.
\end{defn}

\begin{defn}[Acting on a presheaf]\label{defn:action}
Let $S$ be an inverse semigroup and $(X,E(S),p)$ be a presheaf of metric spaces over the idempotents of $S$. We say that $S$ \emph{acts on} $(X,E(S),p)$ if there exists an action $(\cdot)\colon X \times S \to X$ which restricts to the action $X \times E(S) \to X$ and satisfies the following for all $x,y\in X$, $e\in E(S)$ and $s\in S$:
\begin{itemize}
    \item The projection is $S$-equivariant: $p(x \cdot s) = s^{-1}p(x)s$;
    \item The action is by $1$-Lipschitz maps: if $x,y\in X_e$, $d_e(x,y) \geq d_{s^{-1}es}(x \cdot s, y \cdot s) $.
\end{itemize}
\end{defn}

\begin{rem}\label{rem:restriction_is_multiplication}
    Notice that, by definition, every idempotent acts as the restriction to the corresponding fiber.
\end{rem}

\begin{lem} \label{lem:s_induces_isom}
    Let $S$ act on $(X,E(S),p)$ a presheaf of metric spaces. Each $s \in S$ induces an isometry
    $\theta_s \colon X \cdot ss^{-1} \to X \cdot s^{-1}s$ by acting on the right by $s$.
\end{lem}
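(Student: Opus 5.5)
The plan is to first identify the two sets involved and then show that right multiplication by $s^{-1}$ inverts right multiplication by $s$, with the $1$-Lipschitz condition applied in both directions. Here $X\cdot ss^{-1}$ is read as the set $\{x\cdot ss^{-1} : x\in X\}$. By Remark~\ref{rem:restriction_is_multiplication} and the presheaf axioms, this is exactly the set of points $y$ with $p(y)\le ss^{-1}$.

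Indeed, if $y=x\cdot ss^{-1}$ then $p(y)=p(x)ss^{-1}\le ss^{-1}$. Conversely, if $p(y)\le ss^{-1}$ then
\[y=y\cdot p(y)=y\cdot p(y)ss^{-1}=(y\cdot p(y))\cdot ss^{-1}\in X\cdot ss^{-1}.\]
The same description holds for $X\cdot s^{-1}s$.

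\textbf{Step 1: $\theta_s$ is well defined.} Take $y$ with $p(y)\le ss^{-1}$. By equivariance, $p(y\cdot s)=s^{-1}p(y)s\le s^{-1}s$, since $s^{-1}p(y)s\cdot s^{-1}s=s^{-1}p(y)s$. So $y\cdot s\in X\cdot s^{-1}s$, and symmetrically $\theta_{s^{-1}}$ maps $X\cdot s^{-1}s$ into $X\cdot ss^{-1}$.

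\textbf{Step 2: the two maps are mutually inverse.} Using that the action is a right action and that idempotents act as restrictions,
\[(y\cdot s)\cdot s^{-1}=y\cdot ss^{-1}=y\cdot p(y)ss^{-1}=y\cdot p(y)=y.\]
Here the second equality uses $p(y)\le ss^{-1}$, i.e.\ $p(y)ss^{-1}=p(y)$. The other composite is the identity by the same computation with $s$ and $s^{-1}$ swapped. Hence $\theta_s$ is a bijection with inverse $\theta_{s^{-1}}$.

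\textbf{Step 3: $\theta_s$ preserves distances.} Let $x,y\in X\cdot ss^{-1}$. If $p(x)\ne p(y)$, then $p(x\cdot s)\ne p(y\cdot s)$: otherwise conjugating back by $s$ would give
\[p(x)=ss^{-1}p(x)ss^{-1}=s\,p(x\cdot s)\,s^{-1}=s\,p(y\cdot s)\,s^{-1}=p(y),\]
a contradiction. So both distances are $\infty$. If instead $x,y\in X_e$ with $e\le ss^{-1}$, the $1$-Lipschitz condition gives $d(x\cdot s,y\cdot s)\le d(x,y)$. Applying the same condition to $s^{-1}$ and the points $x\cdot s,y\cdot s$, and then using Step 2, gives $d(x,y)\le d(x\cdot s,y\cdot s)$. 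Therefore equality holds.

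The only step needing any care is the fiber bookkeeping in Step 3. There one must check that $s\,p(x\cdot s)\,s^{-1}=p(x)$ whenever $p(x)\le ss^{-1}$, which follows from $ss^{-1}e\,ss^{-1}=e$; everything else is routine.
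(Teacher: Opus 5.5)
Your proof is correct and follows the same route as the paper: match the fibers so that infinite distances correspond, then apply the $1$-Lipschitz condition for $s$ and again for $s^{-1}$, using that points of $X\cdot ss^{-1}$ are fixed by $ss^{-1}$. Beyond the paper, you also check that $\theta_s$ lands in $X\cdot s^{-1}s$ and is a bijection with inverse $\theta_{s^{-1}}$, which the paper leaves implicit.
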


\begin{proof}
    Let $x_0,y_0\in X$, and let $x=x_0\cdot ss^{-1}$ and $y=y_0\cdot ss^{-1}$. Notice that $p(x) = p(y)$ if and only if
    $p(x \cdot s) = p(y \cdot s)$, and therefore
    $d(x,y) = \infty $ if and only if $d(x \cdot s, y\cdot s) = \infty$.
    Thus suppose that $p(x) = p(y) = e$. By assumption $$e = p(x) = p(x_0 \cdot ss^{-1})
    =p(x_0)ss^{-1};$$ hence
     $e \leq ss^{-1}$ and, since the action is by 1-Lipschitz maps,
    $$d_{e}(x,y) \geq d_{s^{-1}es}(x \cdot s, y\cdot s) \geq d_{e}(x \cdot ss^{-1}, y \cdot ss^{-1}) = d_{e}(x,y).$$
\end{proof}

\begin{rem}
    An action of an inverse semigroup is often defined as a homomorphism onto the partial bijections of a set. The previous lemma shows that an action of $S$ on a global presheaf $(X,E(S),p)$ induces a homomorphism from $S$ onto the partial bijections on $X$ which is \emph{idempotent-separating}, meaning that it is injective on idempotents (this is because, given any $x\in X_e$, then $p(x\cdot e)=p(x)=e$). Actually, by \cite[Theorem D]{tesolin2025generalisationmunnsemigroup} actions of $S$ on a global presheaf (of sets) $(X,E(S),p)$ are in one-to-one correspondence with certain idempotent-separating representations of $S$ as partial bijections of $X$, namely those whose image is a wide subsemigroup of the generalised Munn semigroup $T_X$. 
\end{rem}

\noindent Natural examples of presheaves on which a semigroup acts arise from its \emph{Schützenberger graphs}, first introduced in \cite{stephen1990presentations}. 

\begin{defn}[Cayley graph and Sch\"{u}tzenberger graphs] \label{defn: cayley graph}
    Let $S$ be an inverse semigroup with generating set $\mathcal{G}$. Define
    $\Cay{S}{\mathcal{G}}$ to be the directed graph whose vertices are elements of $S$, 
    and filling in an oriented edge from $s$ to $t$, labelled by some $g \in \mathcal{G}$, if and only if $gs = t$.
    
    Define the \emph{Schützenberger graphs} as the biconnected components of $\Cay{S}{\mathcal{G}}$, that is, the maximal subgraphs where any two vertices $a$ and $b$ are connected by an oriented path from $a$ to $b$ and an oriented path from $b$ to $a$. Notice that Schützenberger graphs coincide with $\mathcal L$-equivalence classes: this is because there is an oriented path from $s$ to $t$ if and only if $xs=t$ for some $x\in S$. Furthermore, within a Schützenberger graph, edges come in inverse pairs by \Cref{lem:edge_in_pairs}. Hence one defines the \emph{Cayley metric} $d_{\mathcal{G}}$ on $S$ by setting 
    $d_{\mathcal{G}}(s,t)=\infty$ if $(s,t)\not\in \mathcal{L}$, and the path distance in their common Schützenberger graph if $(s,t)\in \mathcal{L}$. This makes $(S,d_{\mathcal{G}})$ a presheaf of geodesic metric spaces over $E(S)$, with projection $s\to\dom{s}$ and action $S\times S\to S$ given by right multiplication.
\end{defn}

\noindent As we shall prove in \Cref{lem:cayley_dist_without_idempotents} below, the Cayley metric can actually be computed using only non-idempotent elements, leading us to the definition of a \emph{quasi}-generating set:
\begin{defn}[Quasi-generating set]
    A subset $\mathcal{M}\subseteq S$ is a \emph{quasi-generating set} if $S=\langle \mathcal{M}\cup E(S)\rangle$. We always assume that a quasi-generating set is \emph{symmetric}, meaning that $\mathcal{M}=\mathcal{M}^{-1}\coloneq\{x^{-1}\mid x\in \mathcal{M}\}$.

    We say that $S$ is \emph{quasi-finitely generated} (resp. quasi-countable) if it admits a finite (resp. countable) quasi-generating set. For reference, being quasi-finitely generated is weaker than admitting a \emph{finite labeling} as in \cite[Definition 3.16]{Uniform_Roe_alg}.
\end{defn}

\begin{lem}\label{lem:cayley_dist_without_idempotents}
    Let $\mathcal M$ be a quasi-generating set for $S$. Then $d_{\mathcal M\cup E(S)}=d_{\mathcal{M}}$, where $$d_{\mathcal{M}}(s,t)=\begin{cases}
        \min\{k\mid \exists m_1,\ldots, m_k\in \mathcal{M} \mbox{ s.t. }s= m_1\ldots m_kt\}&\mbox{if }(s,t)\in \mathcal{L};\\
        \infty &\mbox{if }(s,t)\not\in \mathcal{L}.
    \end{cases}$$
\end{lem}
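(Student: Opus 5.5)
We show that the two extended metrics agree. First note that both are infinite exactly when $(s,t)\notin\mathcal L$: for $d_{\mathcal M\cup E(S)}$ this holds by \Cref{defn: cayley graph}, and for $d_{\mathcal M}$ by definition. We may therefore fix $(s,t)\in\mathcal L$. Here $d_{\mathcal M\cup E(S)}(s,t)$ is the path distance in the common Schützenberger graph. Since edges come in inverse pairs by \Cref{lem:edge_in_pairs}, this is the least $k$ such that $s=g_1\cdots g_kt$ with $g_i\in\mathcal M\cup E(S)$, where every intermediate element $g_j\cdots g_kt$ lies in the same $\mathcal L$-class. The inequality $d_{\mathcal M\cup E(S)}\le d_{\mathcal M}$ needs a small check. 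Given $s=m_1\cdots m_kt$ with $m_i\in\mathcal M$, the partial products $u_j=m_j\cdots m_kt$ must give an oriented path inside the Schützenberger graph. By \Cref{rem:order_under_products} we have $\dom{s}\le\dom{u_j}\le\dom{t}=\dom{s}$, so all the $u_j$ are $\mathcal L$-related to $t$ and the path is legitimate. The same argument shows that the description of $d_{\mathcal M\cup E(S)}$ as a minimum over all words, with no constraint on the intermediate products, is valid.

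The substantive direction is $d_{\mathcal M}\le d_{\mathcal M\cup E(S)}$. Take a minimal word $s=g_1\cdots g_kt$ with $g_i\in\mathcal M\cup E(S)$; by the previous paragraph all partial products $u_j=g_j\cdots g_kt$ lie in the $\mathcal L$-class of $t$. If some $g_j=e$ is idempotent, then $u_j=eu_{j+1}$ with $(u_j,u_{j+1})\in\mathcal L$, so the ``furthermore'' part of \Cref{lem:edge_in_pairs} gives $u_j=u_{j+1}$. Deleting $g_j$ then yields a shorter word representing $s$, which contradicts minimality. (Equivalently, idempotent edges inside a Schützenberger graph are loops and never shorten a path.) Hence a minimal word uses only letters of $\mathcal M$, which gives $d_{\mathcal M}(s,t)\le d_{\mathcal M\cup E(S)}(s,t)$.

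Finally, we check that the minimum defining $d_{\mathcal M}$ is taken over a nonempty set whenever $(s,t)\in\mathcal L$, so that $d_{\mathcal M}$ is finite on $\mathcal L$-classes. Since $\mathcal M$ quasi-generates $S$, we have $\mathcal M\cup E(S)$ generating $S$. Then $s=(st^{-1})t$, because $st^{-1}t=s\,\dom{s}=s$, and $st^{-1}$ is some word in $\mathcal M\cup E(S)$, with the convention $k=0$ when $s=t$. The deletion argument above then removes the idempotent letters. The only delicate point in the whole proof is that the deletion step uses $\mathcal L$-equivalence of consecutive partial products, and this is exactly what \Cref{rem:order_under_products} supplies.
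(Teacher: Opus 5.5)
Your proof is correct, but it is organized differently from the paper's. The paper starts from an arbitrary word $s=x_1\cdots x_l t$ and first normalizes it. Using the identity $ex=x(x^{-1}ex)$, which relies on idempotents commuting, it pushes every idempotent letter to the right past the letters of $\mathcal M$ and merges them into a single idempotent $f$. Only then does it apply \Cref{rem:order_under_products} and the ``furthermore'' part of \Cref{lem:edge_in_pairs}, once, to get $ft=t$.

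You instead note that every partial product $u_j$ of any word from $t$ to $s$ is already $\mathcal L$-related to $t$. So each idempotent letter can be deleted where it sits, with no rewriting; geometrically, idempotent-labelled edges in a Sch\"utzenberger graph are loops.

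Your route is more direct. It never needs the conjugation trick, and the path stays in the same Sch\"utzenberger graph with the same vertices apart from the deleted loops. You also make explicit two points the paper leaves tacit. First, $d_{\mathcal M\cup E(S)}$ is a minimum over unconstrained words, because intermediate products automatically stay in the $\mathcal L$-class. Second, the easy inequality $d_{\mathcal M\cup E(S)}\le d_{\mathcal M}$ needs exactly this fact to see that $\mathcal M$-words give legitimate paths.

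The paper's normalization proves a slightly more general rewriting fact. Any product of elements of $\mathcal M\cup E(S)$ equals the product of the same $\mathcal M$-letters, in the same order, followed by a single idempotent, with no $\mathcal L$-hypothesis; that hypothesis enters only at the last step.

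Your final paragraph on nonemptiness is redundant: finiteness of $d_{\mathcal M\cup E(S)}$ on $\mathcal L$-classes plus your deletion argument already give it. It is harmless, though.
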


\begin{proof} If $(s,t)\not\in \mathcal {L}$ there is nothing to prove, so suppose otherwise. Write $s= x_1\ldots x_l t$, where each $x_i\in \mathcal M\cup E(S)$. Notice that we can move all $x_i\in \mathcal M$ to the left, because for every idempotent $e$ we have that $ex_i=x_{i}(x_{i}^{-1}ex_{i})$ and the quantity between brackets is still an idempotent. Thus we can write $s= x_{i_1}\ldots x_{i_k} f_1\ldots f_r t$, where each $x_{i_j}$ belongs to $\mathcal M$ and each $f_j$ is an idempotent. Since a product of idempotents is itself idempotent, $f\coloneq f_1\ldots f_r\in E(S)$. By \Cref{rem:order_under_products}, $\dom{s}\le \dom{ft}\le \dom{t}=\dom{s}$, so $(ft,t)\in \mathcal L$. In turn, since $f$ is an idempotent, the furthermore part of \Cref{lem:edge_in_pairs} yields that $ft=t$, so $s= x_{i_1}\ldots x_{i_k}t$, as required. 
\end{proof}
\noindent As in the group case, two finite quasi-generating sets give biLipschitz equivalent Cayley metrics:

\begin{lem}[{\cite[Prop. 5.2.4]{magan2021amenability}}]\label{lem:compare_cayley_metrics}
    Let $S$ be an inverse semigroup, and let $\mathcal{M}$, $\mathcal N$ be two finite quasi-generating sets. Then the identity map on $S$ induces an order-preserving biLipschitz equivalence $(S,d_{\mathcal{M}})\to (S,d_{\mathcal N})$.
\end{lem}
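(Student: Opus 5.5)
The plan is to compare the two metrics directly and obtain the biLipschitz constant from word lengths, as in the group case. The identity map preserves the natural partial order, since it is the identity. It also preserves finiteness of distances: by \Cref{lem:cayley_dist_without_idempotents}, both $d_{\mathcal M}(s,t)$ and $d_{\mathcal N}(s,t)$ are finite exactly when $(s,t)\in\mathcal L$. Indeed, for $(s,t)\in\mathcal L$ we have $Ss=St$, so $s=xt$ for some $x\in S$. Writing $x$ as a product of elements of $\mathcal M\cup E(S)$ and pushing the idempotents to the right, as in the proof of \Cref{lem:cayley_dist_without_idempotents}, gives a finite word in $\mathcal M$. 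So it suffices to find a constant $\lambda$ with $d_{\mathcal N}(s,t)\le \lambda\, d_{\mathcal M}(s,t)$ for all $(s,t)\in\mathcal L$; the reverse inequality then follows by symmetry.

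For each $m\in\mathcal M$, write $m$ as a product of elements of $\mathcal N\cup E(S)$, which is possible because $\mathcal N$ is quasi-generating. Commute the idempotents to the right using $ex=x(x^{-1}ex)$, so that $m=n_1\cdots n_k f_m$ with $n_i\in\mathcal N$ and $f_m\in E(S)$. Let $\ell(m)=k$, and set $\lambda=\max_{m\in\mathcal M}\ell(m)$, which is finite because $\mathcal M$ is finite.

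Now suppose $s=m_1\cdots m_k t$ with $k=d_{\mathcal M}(s,t)$. Substituting the expressions above gives
\[ s=w_1f_{m_1}w_2f_{m_2}\cdots w_kf_{m_k}t, \]
where each $w_i$ is a word in $\mathcal N$ of length at most $\lambda$. Pushing all idempotents to the right once more yields $s=w\,f\,t$, where $w$ is a word in $\mathcal N$ of length at most $\lambda k$ and $f\in E(S)$. This is exactly the step in the proof of \Cref{lem:cayley_dist_without_idempotents}: by \Cref{rem:order_under_products}, $\dom{s}\le\dom{ft}\le\dom{t}=\dom{s}$, so $(ft,t)\in\mathcal L$. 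The furthermore part of \Cref{lem:edge_in_pairs} then gives $ft=t$. Hence $s=wt$, and $d_{\mathcal N}(s,t)\le\lambda k$.

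The main point needing care is this last step: the idempotents $f_m$ appearing in the expression of each generator must disappear without changing $s$. The argument via $\mathcal L$-equivalence and \Cref{lem:edge_in_pairs} handles it, with the earlier lemma applied to $\mathcal M\cup E(S)$ versus $\mathcal N\cup E(S)$. The remaining steps are routine bookkeeping of word lengths.
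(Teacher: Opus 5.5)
Your proof is correct. You bound the $\mathcal N$-word length of each $m\in\mathcal M$ by $\lambda$, then eliminate the accumulated idempotent exactly as in \Cref{lem:cayley_dist_without_idempotents} (via \Cref{rem:order_under_products} and \Cref{lem:edge_in_pairs}). This gives $d_{\mathcal N}\le \lambda\, d_{\mathcal M}$, and symmetry gives the reverse bound. Use $\max\{\lambda,1\}$ to handle the degenerate case $\mathcal M\subseteq E(S)$, where $S=E(S)$ and all distances are $0$ or $\infty$.

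The paper does not prove this lemma itself; it imports it from \cite[Prop. 5.2.4]{magan2021amenability}. Your argument is the standard word-length comparison, and it makes the statement self-contained using only tools already set up in \Cref{sec:background}.
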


\section{Milnor-Schwarz for inverse monoids}
In this section we present a self-contained proof of the Milnor-Schwarz lemma, which is \Cref{thm:MilnorSchwarz} below. For the next definition, given an extended metric space $(X,d)$, we denote the ball of radius $R$ around a point $x\in X$ by $B(x,R)$.
\begin{defn}\label{defn:proper_and_cobounded}
Let  $S$ be an inverse monoid, acting on a presheaf of metric spaces $(X,E(S),p)$. The action is:
\begin{enumerate}
	\item \emph{proper} if for every $y_1\in X_1$ and every $R\ge 0$, there exists a finite subset $\mathcal{C}=\mathcal{C}(y_1,R)\subseteq S$ such that
	\[
		\{ s \in S \mid d(y_1 \cdot s, y_1\cdot \dom{s})\le R\}\subseteq \mathcal{C} E(S).
	\]
    \item \emph{cobounded} if there exist $x_1\in X_1$ and $T\ge 0$ such that $B(x_1,T) \cdot S = X$. When we want to stress the constant $T$, we say the action is $T$-cobounded.
\end{enumerate}
\end{defn}

\begin{rem}\label{rem:only_check_properness_once}
	In the definition of properness, we could have equivalently required the existence of \emph{some $y_1$} such that, for every $R\ge 0$, the set  $\{s \in S \mid d(y_1 \cdot s, y_1\cdot \dom{s})\le R\}$ is covered by finitely many $E(S)$-cosets. Indeed, let $z_1\in X_1$ be any other point, and let $D=d(y_1,z_1)$. If $d(z_1 \cdot s, z_1\cdot \dom{s})\le R$ for some $R\ge 0$, then 
    $$d(y_1 \cdot s, y_1\cdot \dom{s})\\
    \le d(y_1 \cdot s, z_1\cdot s)+d(z_1 \cdot s, z_1\cdot \dom{s})+d(z_1 \cdot \dom{s}, y_1\cdot \dom{s})\\
    \le R+2D,$$
    where we used that the action is by $1$-Lipschitz maps. This shows that 
    \[
		\{ s \in S \mid d(z_1 \cdot s, z_1\cdot \dom{s})\le R\}\subseteq \mathcal{C}(y_1,R+2D) E(S).
	\]
\end{rem}

\begin{lem}
    If $\mathcal{M}$ is a finite quasi-generating set for $S$, then the $S$-action on $(S,d_{\mathcal{M}})$ is proper and cobounded.
\end{lem}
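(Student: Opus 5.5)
Properness and coboundedness can be checked separately for the action of $S$ on $(S,d_{\mathcal M})$ by right multiplication, with projection $s\mapsto \dom{s}$, as in \Cref{defn: cayley graph}. Here $X_1$ is the fiber over the identity. Since $\dom{s}=1$ means $s^{-1}s=1$, this fiber is the $\mathcal L$-class of $1$, and we take $y_1=x_1=1$. By \Cref{rem:only_check_properness_once}, it suffices to check properness at this single point.

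\emph{Coboundedness.} We show $B(1,0)\cdot S=X$, so the action is $0$-cobounded with $x_1=1$. Indeed, every $t\in S$ satisfies $t=1\cdot t$, so $\{1\}\cdot S=S$.

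\emph{Properness.} Fix $R\ge 0$ and suppose $d_{\mathcal M}(1\cdot s,1\cdot \dom{s})=d_{\mathcal M}(s,\dom{s})\le R$. By \Cref{lem:cayley_dist_without_idempotents}, $s$ and $\dom{s}$ are $\mathcal L$-related, and there are $k\le R$ and $m_1,\dots,m_k\in\mathcal M$ with
\[ s=m_1\cdots m_k\,\dom{s}. \]
Writing $c=m_1\cdots m_k$, we get $s\in cE(S)$. We therefore set
\[ \mathcal C=\{m_1\cdots m_k \mid 0\le k\le R,\ m_i\in\mathcal M\}, \]
where the empty product is $1$. This set is finite because $\mathcal M$ is finite, and it satisfies $\{s\mid d(s,\dom{s})\le R\}\subseteq \mathcal C E(S)$, which is properness. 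The case $k=0$ causes no trouble: it gives $s=\dom{s}\in 1\cdot E(S)$.

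The only point needing care is that the distance formula of \Cref{lem:cayley_dist_without_idempotents} is stated as $s=m_1\cdots m_k t$ with the generators on the left. This is consistent with edges $s\to gs$ in \Cref{defn: cayley graph}, so taking $t=\dom{s}$ gives the factorisation above. I do not expect any genuine obstacle.
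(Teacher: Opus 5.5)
Your proof is correct and follows the paper's argument essentially verbatim: you take $x_1=1$ so that $1\cdot S=S$ gives coboundedness, reduce properness to the basepoint $1$ via \Cref{rem:only_check_properness_once}, and use \Cref{lem:cayley_dist_without_idempotents} to write $s=m_1\cdots m_k\dom{s}$ with $k\le R$, so that the finite set of words of length at most $R$ in $\mathcal M$ serves as $\mathcal C(1,R)$. Your explicit treatment of the empty product (the case $k=0$) is a small clarification that the paper leaves implicit.
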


\begin{proof}
Let $x_1=1$ be the identity element. Since $S=1\cdot S$, we immediately see that the action is cobounded. Moving to properness, by \Cref{rem:only_check_properness_once} we can restrict to $y_1=1$. Moreover, if $d_{\mathcal{M}}(s,\dom{s})\le R$, then $s=m_1\ldots m_k \dom{s}$ for some $m_i\in \mathcal{M}$ and $k\le R$ (here we are using that the Cayley distance can be computed only using non-idempotent elements, in view of \Cref{lem:cayley_dist_without_idempotents}). This shows that $s\in \mathcal{M}^{\le\lceil R\rceil}E(S)$, where $\lceil R\rceil$ is the smallest integer greater than $R$ and 
$$\mathcal{M}^{\le\lceil R\rceil}=\{m_1\ldots m_k\mid k\le \lceil R\rceil,\, m_i\in \mathcal M\mbox{ for all }i=1,\ldots,k\}.$$ Hence we can set $\mathcal{C}(1,R)=\mathcal{M}^{\le\lceil R\rceil}$.
\end{proof}

\noindent We are finally ready to state the main theorem of this paper:
\begin{thm}\label{thm:MilnorSchwarz}
	Let $S$ be an inverse monoid and $(X, E(S),p)$ be a presheaf of geodesic metric spaces. 
	If $S$ acts on $X$ properly and coboundedly then:
    \begin{enumerate}
        \item There exists a finite, quasi-generating set $\mathcal{M}\subseteq S$.
        \item\label{item:MS:qi} There exists $x_1\in X_1$ such that the orbit map $(S,d_{\mathcal{M}})\to X$ sending $s\in S$ to $x_1\cdot s$ is an order-preserving quasi-isometry.
    \end{enumerate}
\end{thm}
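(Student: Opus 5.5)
We adapt the classical proof of the Milnor-Schwarz lemma. Fix $x_1\in X_1$ and $T\ge 0$ witnessing $T$-coboundedness. Define
\[
\mathcal{M}=\{s\in S \mid d(x_1\cdot s, x_1\cdot \dom{s})\le 2T+1\}.
\]
Properness (for $y_1=x_1$, $R=2T+1$) gives a finite $\mathcal{C}$ with $\mathcal{M}\subseteq \mathcal{C}E(S)$. We then replace $\mathcal{M}$ by the finite symmetric set $\mathcal{M}'=\mathcal{C}\cup\mathcal{C}^{-1}$, keeping only those $c\in\mathcal{C}$ for which some $ce\in \mathcal{M}$. Note that $\mathcal{M}$ is closed under inverses: by \Cref{lem:s_induces_isom}, $s^{-1}$ maps $x_1\cdot \dom{s}$ to $x_1\cdot \ran{s}$ and $x_1\cdot s$ to $x_1\cdot ss^{-1}$ isometrically. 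The replacement costs only idempotent factors. The whole argument will be phrased with $\mathcal{M}$ and then transferred to the finite set, since idempotents can be absorbed as in \Cref{lem:cayley_dist_without_idempotents}.

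\textbf{Key step (generation and upper bound).} Take $s\in S$ and set $e=\dom{s}$. The points $x_1\cdot e$ and $x_1\cdot s$ lie in the same fiber $X_e$, because $p(x_1\cdot s)=s^{-1}s=e$. Let $L=d(x_1\cdot e,x_1\cdot s)$, and choose a geodesic in $X_e$ from $x_1\cdot e$ to $x_1\cdot s$. Subdivide it into points $y_0=x_1\cdot e,\dots,y_n=x_1\cdot s$ with consecutive distance at most $1$ and $n\le L+1$. By coboundedness, each $y_i=z_i\cdot t_i$ with $z_i\in B(x_1,T)$ and $t_i\in S$. Then $d(y_i, x_1\cdot t_i)\le T$ by $1$-Lipschitz, and the fiber matters: $p(y_i)=e$ gives $\dom{t_i}\ge e$, so we may replace $t_i$ by $t_ie$, which has domain $e$ and satisfies $x_1\cdot t_ie = (x_1\cdot t_i)\cdot e$, still within $T$ of $y_i$. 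Take $t_0=e$ and $t_n=s$.

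We then want $g_i$ with $t_{i}=g_i t_{i-1}$ and $g_i\in\mathcal{M}E(S)$. Set $g_i=t_i t_{i-1}^{-1}$. Since all $t_i$ have domain $e$, we get $g_it_{i-1}=t_i e=t_i$. For the distance estimate, act on $x_1$ by $g_i$ and compare with $x_1\cdot\dom{g_i}$. Apply the isometry induced by $t_{i-1}$, which goes from the fiber below $\ran{t_{i-1}}$ to the one below $e$; note $\dom{g_i}\le \ran{t_{i-1}}$. The distance $d(x_1\cdot g_i, x_1\cdot\dom{g_i})$ should then be bounded by $d(x_1\cdot t_i,x_1\cdot t_{i-1})\le 2T+1$, up to restricting. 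This bookkeeping with domains and the isometry $\theta_{t_{i-1}}$ is the main obstacle: one must check that $x_1\cdot \dom{g_i}\cdot t_{i-1}$ and $x_1\cdot g_it_{i-1}$ are restrictions of $x_1\cdot t_{i-1}$ and $x_1\cdot t_i$ to a common fiber, using $\dom{g_i t_{i-1}}=e$. Granting this, $s=g_n\cdots g_1 e$. This yields quasi-generation (every $s$ is a product of elements of $\mathcal{M}'$ and idempotents) and the bound
\[
d_{\mathcal{M}'}(s,\dom{s})\le L+1 = d(x_1\cdot s,x_1\cdot\dom{s})+1.
\]
For general $s,t$ with $(s,t)\in\mathcal{L}$, apply the same argument to the geodesic from $x_1\cdot t$ to $x_1\cdot s$, starting at $t_0=t$.

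\textbf{Remaining parts of (2).} The lower bound is straightforward: if $s=m_1\cdots m_k t$, then each letter moves the orbit by at most $K=\max_{m\in\mathcal{M}'}d(x_1\cdot m,x_1\cdot\dom{m})$, using $1$-Lipschitz and the triangle inequality along $m_j\cdots m_kt$. Hence $d(x_1\cdot s,x_1\cdot t)\le K\,d_{\mathcal{M}'}(s,t)$. The orbit map preserves fibers, hence finiteness of distances, via $p(x_1\cdot s)=\dom{s}$. It is order-preserving because $s=ft$ implies $x_1\cdot s=(x_1\cdot f)\cdot t\le x_1\cdot t$. Coarse surjectivity is coboundedness: each $y=z\cdot t$ with $d(z,x_1)\le T$ is within $T$ of $x_1\cdot t$.
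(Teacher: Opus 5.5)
Your proposal is correct and follows the paper's proof essentially step by step: the same set $\{s\in S \mid d(x_1\cdot s,x_1\cdot\dom{s})\le 2T+1\}$, the same subdivision of a geodesic in $X_{\dom{s}}$, the same factors $t_it_{i-1}^{-1}$, the same use of properness, and the same letter-by-letter lower bound (you handle a general pair $(s,t)\in\mathcal{L}$ directly rather than reducing to $(st^{-1},\dom{st^{-1}})$, and you check order-preservation, which the paper leaves implicit). The step you ``grant'' is immediate: since $\dom{t_i}=\dom{t_{i-1}}$, one gets $\dom{g_i}=t_{i-1}\dom{t_i}t_{i-1}^{-1}=\ran{t_{i-1}}$ exactly, so $\theta_{t_{i-1}}$ sends $x_1\cdot g_i$ and $x_1\cdot\dom{g_i}$ to $x_1\cdot t_i$ and $x_1\cdot t_{i-1}$ (the paper gets the inequality even more directly by applying the $1$-Lipschitz map given by $t_{i-1}^{-1}$).
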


\begin{proof}
	By coboundedness there exist an 
    $x_1 \in X_1$ and $T\ge 0$ such that $B(x_1,T) \cdot S = X$. Let $$G \coloneq \{ s \in S \mid d(x_1 \cdot s, x_1\cdot \dom{s})\le 2T+1\},$$ we work to show that $G$ is a generating set of $S$. For every $s \in S$, $X_{\dom{s}}$ is a geodesic metric space, so there exists a geodesic
	$\gamma$ from $x_1 \cdot \dom{s}$ to $x_1 \cdot s$ in  $X_{\dom{s}}$.
    Take points along the geodesic at most a distance $1$ apart: concretely,
    let $$x_1 \cdot \dom{s} = p_1, \,p_2,\, \cdots,\, p_k = x_1 \cdot s$$
    be a series of points
    such that $d(p_i,p_{i+1}) \leq 1$ and $k \leq d(x_1 \cdot \dom{s}, x_1 \cdot s) + 2$.
    Since each $p_{i}$ is in $B(x_1,T) \cdot S$ there exists elements $s_{i}$ in $S$ 
	such that $d(p_{i}, x_1 \cdot s_{i}) \leq T$; for $i=k$ we can choose $s_k=s$. This implies that $x_1 \cdot s_i \in X_{\dom{s}}$ and hence
	$\dom{s_{i}} = \dom{s}$ for all $i$.
	Define $u_1 = s_1$ and $u_{i} = s_{i} s_{i-1}^{-1}$ for $2 \leq i \leq k$; notice that $u_k=ss_{k-1}^{-1}$. Then,
	\[
		s = s\dom{s}^{k-1}=s\dom{s_{k-1}}\ldots\dom{s_1}=(s s_{k-1}^{-1})(s_{k-1}s_{k-2}^{-1})\ldots (s_2s_1^{-1})s_1 = u_{k} \ldots u_{1}.
	\]
    Now, by assumption
    $d(x_1 \cdot s_1, x_1 \cdot \dom{s_1}) \leq T$, so $s_1\in G$. Moreover, for all $2 \leq i \leq k$,
	\[d(x_1 \cdot u_i, x_1 \cdot \dom{u_i}) = d(x_1 \cdot s_is_{i-1}^{-1}, x_1 \cdot s_{i-1}s_i^{-1}s_is_{i-1}^{-1})\]\[ =d(x_1 \cdot s_is_{i-1}^{-1}, x_1 \cdot s_{i-1}s_{i-1}^{-1}) \leq d(x_1\cdot  s_{i}, x_1 \cdot s_{i-1}) \leq 2T + 1,\]
	where we used that $s_i^{-1}s_i=\dom{s_i}=\dom{s_{i-1}}=s_{i-1}^{-1}s_{i-1}$. 
    Hence,  all $u_i$ are in $G$, so $G$ is a generating set.
    By properness $G \subseteq \mathcal{C}E(S)$ where $\mathcal{C}= \mathcal{C}(x_1, 2T+1)$, so
    $\mathcal{M} = \mathcal{C}$ is a finite quasi-generating set of $S$.

    We now work to prove (2). The orbit map $S\to x_1\cdot S$ is $T$-coarsely surjective by coboundedness, so we focus on showing it is a quasi-isometric embedding. Note that $$d_{\mathcal{M}}(s,t) < \infty \iff \dom{s}=\dom{t} \iff
    d(x_1 \cdot s, x_1 \cdot t) < \infty.$$ Hence, if $d_{\mathcal{M}}(s,t)<\infty$ then 
    \[d(x_1 \cdot s, x_1 \cdot t) =d(x_1 \cdot s\dom{t}, x_1 \cdot t\dom{s})= d(x_1 \cdot st^{-1}, x_1 \cdot \dom{st^{-1}}),\]
    where we used \Cref{lem:s_induces_isom} for the last equality. The same argument applies to the Cayley metric $d_\mathcal{M}$; hence it is enough to show that, for every $s\in S$, $d(x_1\cdot \dom{s}, x_1\cdot s)$ is linearly bounded above and below in terms of $d_{\mathcal{M}}(\dom{s}, s)$. 
    
   Write
    $s = u_{k} \cdots u_1$ as before. For each $i$ denote $v_{i} = u_{i} \cdots u_1 \dom{s}$, with $v_0 = \dom{s}$. By \Cref{rem:order_under_products}
\[
	\dom{s} = \dom{v_{k}} 
	\leq \dom{v_{k-1}} \leq \dom{v_{k-2}} \cdots\leq \dom{v_0} = \dom{s}
.\]
Hence, $\dom{v_i} = \dom{s}$ for all $i$, so each $u_i$ labels a (possibly loop) edge in the Schützenberger graph of $s$ connecting $v_{i-1}$ to $v_i$. Therefore, by construction of the $u_i$,
$$d_{\mathcal{M}}( \dom{s}, s) \leq k \leq d(x_1 \cdot \dom{s}, x_1 \cdot s) + 2.$$
For the other inequality, let $n=d_{\mathcal M}(s,\dom{s})$, and write $s=t_n\ldots t_1\dom{s}$, where each $t_i\in \mathcal M$ (this can be done by \Cref{lem:cayley_dist_without_idempotents}). Set $v_i=t_i\ldots t_1 \dom{s}$, with $v_0=\dom{s}$. Note that, for all $i$,
\[\dom{v_{i}} = \dom{v_{i+1}} = \dom{t_{i+1}v_{i}} = v_{i}^{-1}\dom{t_{i+1}}v_{i};\] hence
\[\ran{v_{i}}=v_i \dom{v_i}v_i^{-1}=\ran{v_i}\dom{t_{i+1}}\ran{v_i}= \ran{v_i}\dom{t_{i+1}}\leq \dom{t_{i+1}},\]
where we used that idempotents commute. Therefore, combining \Cref{lem:s_induces_isom} and the fact that the action is by $1$-Lipschitz maps, we get
 \[
	 d(x_1 \cdot v_{i+1}, x_1 \cdot v_{i}) =d(x_1 \cdot t_{i+1}v_i, x_1 \cdot v_{i}) = d(x_1 \cdot t_{i+1} \ran{v_{i}}
	 , x_1 \cdot \ran{v_{i}})
	 \leq d(x_1 \cdot t_{i+1}, x_1 \cdot \dom{t_{i+1}})
.\]
Hence
\[
	d(x_1 \cdot \dom{s}, x_1 \cdot s)
	\leq \sum_{i=1}^{n} d(x_1 \cdot v_{i-1}, x_1 \cdot v_{i})\leq n \max_{i=1,\ldots, n} \{d(x_1 \cdot t_{i}, x_1 \cdot \dom{t_{i}}) \}.\]
Since the $t_i$'s belong to the finite set $\mathcal M$, the quantity $d(x_1 \cdot t_{i}, x_1 \cdot \dom{t_{i}})$ is uniformly bounded, thus concluding the proof.
\end{proof}

\begin{rem}[Non-proper actions]\label{rem:no_properness_is_fine}
	Without the properness assumption, the above proof still produces a generating set $G$ such that the orbit map $(S,d_{G})\to X$ is a quasi-isometry. The only difference is that, for the converse inequality, one writes $s=t_n\ldots t_1$ where each $t_i$ belongs to $G$ instead of $\mathcal M$; then the uniform bound in the last line follows from the assumption that $d(x_1 \cdot t, x_1 \cdot \dom{t})\le 2T+1$ whenever $t\in G$.
\end{rem}

\begin{rem}\label{rem:nofinitegen}
    Notice that, in general, one cannot hope to get finite generation from an action on a presheaf, without further assumptions on the structure of the poset $E(S)$. Indeed, let $E$ be any uncountable meet-semilattice with a top element $1_e$, and let $G$ be any finitely generated group. Then $S=E\times G$ is an inverse monoid, with multiplication $(e,g)\cdot(f,h)=(ef,gh)$ and $E(S)=E$. Hence $S$ is quasi-finitely generated, but cannot be finitely generated as it is uncountable. Since $G$ can be any finitely generated group, and since the restriction maps $\{e\}\times G\to \{ef\}\times G$ are isometries, this example also shows that no geometric properties of the fibers, nor of the restriction maps of a presheaf, is enough to ensure finite generation.
\end{rem}

\begin{rem} \label{rem:need_1_for_qi} 
The assumption that $S$ is an inverse \emph{monoid} is necessary in \Cref{thm:MilnorSchwarz}. Indeed, the existence of an orbit map which is a quasi-isometry necessarily implies that the idempotents have a top element, and therefore that $S$ is an inverse monoid by \Cref{lem:unique_max_implies_monoid}. To see this, let $x\in X_e$ be any point, and suppose that the orbit map $(S, d_{\mathcal{M}}) \to X$ defined by $s \in S$ to $x \cdot s$ is a quasi-isometry. For every other idempotent $f\in E(S)$, $ef$ and $f$ both map to $x \cdot f$ under the orbit map, so $d_{\mathcal{M}}(f,ef)<+\infty$; since each Sch\"{u}tzenberger graph contains a unique idempotent, this means that $ef=f$, i.e. $f\le e$ in the partial order.
\end{rem}

\section{Comparison to work of Chung-Martínez-Szakács}
We present here an alternative proof of \Cref{thm:MilnorSchwarz}, building on work of Chung-Martínez-Szakács \cite{Chung_Martinez_szakacs} and a construction analogous to the Vietoris-Rips complex for groups acting on metric spaces.

\begin{defn}[Uniform discreteness]
     An extended metric space $(S,d)$ is \emph{uniformly discrete} if $\inf_{s\neq t}d(s,t)>0$. 
\end{defn}

\begin{defn}
    Let $S$ be an inverse semigroup, and let $d$ be an extended metric on $S$ whose components are the $\mathcal L$-classes. We say $d$ is:
    \begin{itemize}
        \item \emph{right-subinvariant} if $d(sx, tx) \le  d(s, t)$ for every $s, t, x \in S$;
        \item \emph{proper} if for every $r\ge 0$ there is some finite set $F_r\subseteq S$ such that, for every $x,y\in S$ with $x\neq y$ and $d(x, y) \le r$,  there exists $f\in F_r$ such that $y =fx$. 
        \item \emph{Uniformly proper} if there exists some finite set $F_1\subseteq S$ such that, for every $r\ge 0$ and every $x,y\in S$ with $x\neq y$ and $d(x, y) \le r$, there exists $f\in (F_1)^{\leq\lceil r\rceil}$ such that $y =fx$.
    \end{itemize} 
\end{defn}

\begin{thm}[{\cite[Theorem 1]{Chung_Martinez_szakacs}}]\label{thm:cms}
    Let $S$ be an inverse semigroup. Then the following statements are equivalent:
    \begin{enumerate}
        \item $S$ is quasi-countable; 
        \item $S$ admits a proper and right-subinvariant uniformly discrete extended metric whose components are the $\mathcal L$-classes.
    \end{enumerate}
Moreover, such a metric is unique up to bijective coarse equivalence.
\end{thm}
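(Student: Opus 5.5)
The plan is to prove the two implications separately, then the uniqueness, using only the algebraic facts already recorded: \Cref{lem:edge_in_pairs}, \Cref{rem:order_under_products}, and the reduction to non-idempotent letters from \Cref{lem:cayley_dist_without_idempotents}.

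\emph{(1)$\Rightarrow$(2).} Fix a countable symmetric quasi-generating set $\mathcal M=\{m_1,m_2,\ldots\}$, enumerated so that $m_i$ and $m_i^{-1}$ receive the same weight $w(m)=\min\{i\mid m_i=m \text{ or } m_i=m^{-1}\}$. Define $d(s,t)=\infty$ if $(s,t)\notin\mathcal L$, and otherwise
\[
d(s,t)=\min\Bigl\{\textstyle\sum_j w(m_{i_j}) \Bigm| s=m_{i_1}\cdots m_{i_k}t\Bigr\}.
\]
Such an expression exists whenever $(s,t)\in\mathcal L$. Indeed, $Ss=St$ gives $s=xt$ for some $x$. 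Writing $x$ as a word in $\mathcal M\cup E(S)$ and arguing as in \Cref{lem:cayley_dist_without_idempotents}, we can discard the idempotents.

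The metric axioms go as follows.
\begin{itemize}
\item Symmetry: by \Cref{lem:edge_in_pairs}, $s=wt$ implies $t=w^{-1}s$, and $w^{-1}$ is a word of the same weight because $\mathcal M$ is symmetric.
\item Triangle inequality: concatenate words.
\item $d(s,t)=0$ iff $s=t$: the empty word is the only word of weight $0$.
\item Uniform discreteness: every nonempty word has weight at least $1$.
\end{itemize}

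For right-subinvariance, suppose $s=wt$ with $(s,t)\in\mathcal L$. Then $t=w^{-1}s$, so $sx=w(tx)$ and $tx=w^{-1}(sx)$. Hence $S sx=S tx$, so $(sx,tx)\in\mathcal L$, and the same word gives $d(sx,tx)\le d(s,t)$. For properness, $d(x,y)\le r$ with $x\ne y$ forces $y=fx$ with $f$ a word of length at most $r$ in $m_1,\ldots,m_{\lfloor r\rfloor}$ and their inverses. These words form a finite set $F_r$.

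\emph{(2)$\Rightarrow$(1).} Let $d$ be such a metric with finite sets $F_r$, and set $\mathcal M=\bigcup_{r\in\mathbb N}(F_r\cup F_r^{-1})$, which is countable. Take any $s\in S$. Since $\dom{\dom{s}}=\dom{s}$, the elements $s$ and $\dom{s}$ lie in the same $\mathcal L$-class, so $d(\dom{s},s)\le r$ for some integer $r$. If $s\neq\dom{s}$, properness gives $s=f\dom{s}$ with $f\in F_r$. Therefore $S\subseteq \mathcal M E(S)\cup E(S)$, so $\mathcal M$ quasi-generates $S$.

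\emph{Uniqueness.} Let $d,d'$ be two such metrics. I will show that the identity is a coarse equivalence; since both metrics have the $\mathcal L$-classes as components, finiteness of distances is preserved. Suppose $d(x,y)\le r$ and $x\neq y$. Then $y=fx$ for some $f\in F_r$, and $(fx,x)\in\mathcal L$. By \Cref{lem:edge_in_pairs}, $x=f^{-1}fx=\dom{f}x$. Since $(f,\dom{f})\in\mathcal L$, right-subinvariance of $d'$ gives
\[
d'(y,x)=d'(f\cdot x,\dom{f}\cdot x)\le d'(f,\dom{f})\le \max_{g\in F_r} d'(g,\dom{g})<\infty.
\]
This bounds $d'$ uniformly on $d$-bounded pairs. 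The symmetric argument, with the finite sets $F'_r$ attached to $d'$, bounds $d$ uniformly on $d'$-bounded pairs, so the identity is a bijective coarse equivalence.

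The main obstacle is the $\mathcal L$-class bookkeeping in the right-subinvariance step and in the uniqueness step. In both places I need the translated pairs to remain in a common $\mathcal L$-class, since otherwise the inequality is vacuous or false. \Cref{lem:edge_in_pairs} resolves this in both cases, by converting $s=wt$ into the reverse relation $t=w^{-1}s$, and $y=fx$ into $x=\dom{f}x$.
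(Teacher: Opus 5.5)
Your proof is correct. The paper itself gives no proof of this statement: it is quoted from Chung--Martínez--Szakács and used as a black box. So your argument is a self-contained substitute, not a reconstruction of anything in the text.

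The natural point of comparison is \Cref{prop:qfg_iff_unifproper}, which the paper does prove.
\begin{itemize}
\item Your (2)$\Rightarrow$(1) is the same argument as there: apply properness to the pair $(\dom{s},s)$, which lies in a single $\mathcal{L}$-class. The only difference is that the countable union $\bigcup_r (F_r\cup F_r^{-1})$ replaces the powers of a single finite set $F_1$.
\item Your weighted word metric, with weights $w(m)\to\infty$ and $w(m)=w(m^{-1})$, is the countable analogue of the Cayley metric $d_{\mathcal{M}}$ that the paper uses for (1)$\Rightarrow$(2) in the finite case. The weights are exactly what ensures each ball only involves finitely many letters. An unweighted Cayley metric on an infinite $\mathcal{M}$ need not be proper.
\item Your uniqueness argument is the inverse-semigroup version of the standard group argument. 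You rewrite $y=fx$ as the right translate by $x$ of the pair $(f,\dom{f})$, using $x=\dom{f}x$ from \Cref{lem:edge_in_pairs}, and then apply right-subinvariance of the other metric. It yields only a coarse equivalence and uses no geodesicity. That is why the paper, when it later needs a quasi-isometry, adds a separate quasi-geodesic argument on top of \Cref{thm:cms}.
\end{itemize}

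Two cosmetic points. First, for $r<1$ you should take $F_r=\emptyset$; by uniform discreteness no pair is constrained, so you never need an ``empty word'' element. Second, the control function $r\mapsto \max_{g\in F_r} d'(g,\dom{g})$ need not be monotone in $r$. If you want a genuine control function, use $\sup\{d'(x,y)\mid d(x,y)\le r\}$, which your argument shows is finite.
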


\noindent In the quasi-finitely generated case, one can promote properness to uniform properness:

\begin{prop}\label{prop:qfg_iff_unifproper}
        Let $S$ be an inverse semigroup. Then the following statements are equivalent:
    \begin{enumerate}
        \item\label{item:qfg} $S$ is quasi-finitely generated;
        \item\label{item:unifproper} $S$ admits a uniformly proper and right-subinvariant uniformly discrete extended metric whose components are the $\mathcal L$-classes.
    \end{enumerate}
\end{prop}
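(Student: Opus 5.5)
For \eqref{item:qfg}$\Rightarrow$\eqref{item:unifproper}, I take the Cayley metric $d_{\mathcal M}$ of a finite symmetric quasi-generating set $\mathcal M$, as in \Cref{lem:cayley_dist_without_idempotents}. It is integer-valued, so it is uniformly discrete. Its components are the $\mathcal L$-classes by definition.

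Right-subinvariance needs a short check. If $(s,t)\in\mathcal L$ and $s=m_1\cdots m_k t$, then $sx=m_1\cdots m_k tx$. To conclude $d_{\mathcal M}(sx,tx)\le k$ I must show $(sx,tx)\in\mathcal L$. We have $\dom{sx}=x^{-1}\dom{s}x=x^{-1}\dom{t}x=\dom{tx}$, so this holds.

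Uniform properness with $F_1=\mathcal M$ follows from the same formula. If $x\neq y$ and $d_{\mathcal M}(x,y)\le r$, then $y=m_1\cdots m_k x$ with $k\le r$, so $y\in\mathcal M^{\le\lceil r\rceil}x$.

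For \eqref{item:unifproper}$\Rightarrow$\eqref{item:qfg}, let $d$ be such a metric with finite set $F_1$. I claim $\mathcal M:=F_1\cup F_1^{-1}$ is a finite quasi-generating set. Fix $s\in S$ and note $(s,\dom{s})\in\mathcal L$, since $\dom{\dom{s}}=\dom{s}$. Hence $r:=d(\dom{s},s)<\infty$. If $s=\dom{s}$ then $s\in E(S)$. Otherwise uniform properness gives $f\in F_1^{\le\lceil r\rceil}$ with $s=f\dom{s}$, so $s\in\langle\mathcal M\cup E(S)\rangle$.

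The only subtle point is that finiteness of $r$ is essential, and it comes exactly from the components being the $\mathcal L$-classes. Uniform discreteness and right-subinvariance are not used in this direction. The main obstacle, though a mild one, is the $\mathcal L$-class bookkeeping in the subinvariance check.
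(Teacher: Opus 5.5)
Your proof is correct and follows the paper's route. For (2)$\Rightarrow$(1) you apply uniform properness to the pair $(\dom{s},s)$, whose distance is finite because the components are the $\mathcal L$-classes, exactly as the paper does; for (1)$\Rightarrow$(2) you check by hand for the Cayley metric $d_{\mathcal M}$ what the paper simply cites from Chung-Martínez-Szakács (their Example 3.4), and your replacement of $F_1$ by $F_1\cup F_1^{-1}$ is a welcome extra, since the paper's convention requires quasi-generating sets to be symmetric.
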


\begin{proof}
    \eqref{item:qfg}$\Rightarrow$\eqref{item:unifproper}: If $S$ is quasi-finitely generated, then the Cayley metric corresponding to the finite quasi-generating set satisfies the requirements, as pointed out in \cite[Example 3.4]{Chung_Martinez_szakacs}.

    \eqref{item:unifproper}$\Rightarrow$\eqref{item:qfg}: We argue exactly as in the proof of \cite[Theorem 3.23]{Chung_Martinez_szakacs}. Let $F_1$ be as in the definition of uniform properness, and let $s\in S-E(S)$. If we set $D\coloneq d(s,\dom{s})$, which is finite since the components of $d$ are the $\mathcal{L}$-classes, then $s\in (F_1)^{\leq\lceil D\rceil} \dom{s}$, proving that $F_1$ is a quasi-generating set.
\end{proof}

\begin{construction}[Vietoris-Rips graph]
    Let $S$ be an inverse monoid acting on a presheaf of geodesic metric spaces $(X, E(S),p)$, and let $x_1\in X_1$ be a basepoint. Given a constant $R\ge 0$, let $S^R_{x_1}$ be the simplicial graph whose vertex set is $S$ and where two $s,t\in S$ are adjacent if and only if $d(x_1\cdot s,x_1\cdot t)\le R $. We equip $S$ with the path metric $d^R_{x_1}$ where edges of $S^R_{x_1}$ have length $1$.
\end{construction}
\noindent The construction above does depend on the explicit choices of the basepoint $x_1$ and the constant $R$. However, as we shall see in \Cref{lem:dR_qi_X} below, in the presence of a cobounded action, any choice of the basepoint yields a simplicial graph which is quasi-isometric to $X$, provided that $R$ is large enough. 

\begin{prop}\label{prop:action_gives_metric}
    Let $S$ be an inverse monoid and $(X, E(S),p)$ be a (global) presheaf of geodesic metric spaces, and suppose that $S$ acts on $X$ properly and coboundedly. There exists $R_0 \geq 0$ and $x_1 \in X_1$ such that, for every $R\ge R_0$, $d^R_{x_1}$ is a uniformly discrete, proper, right-subinvariant extended metric whose components coincide with the $\mathcal L$-classes.
\end{prop}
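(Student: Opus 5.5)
We take $x_1\in X_1$ and $T\ge 0$ given by coboundedness, with $B(x_1,T)\cdot S=X$, and set $R_0\coloneq 2T+1$. Fix $R\ge R_0$. Then $d^R_{x_1}$ is a path metric on a simplicial graph with unit-length edges, so distinct vertices are at distance at least $1$, giving uniform discreteness. The remaining claims are three: the components are the $\mathcal L$-classes, the metric is right-subinvariant, and it is proper.

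\emph{Components.} If $s,t$ are adjacent, then $d(x_1\cdot s,x_1\cdot t)<\infty$, so $p(x_1\cdot s)=p(x_1\cdot t)$, that is, $\dom{s}=\dom{t}$. Hence each component lies in a single $\mathcal L$-class. Conversely, suppose $\dom{s}=\dom{t}=e$. We join $x_1\cdot s$ to $x_1\cdot t$ by a geodesic in $X_e$ and subdivide it into points $p_1,\dots,p_k$ with consecutive distances at most $1$, exactly as in the proof of \Cref{thm:MilnorSchwarz}. Coboundedness gives $s_i$ with $d(p_i,x_1\cdot s_i)\le T$, choosing $s_1=s$ and $s_k=t$. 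Consecutive $s_i$ then satisfy $d(x_1\cdot s_i,x_1\cdot s_{i+1})\le 2T+1\le R$, so $s_1,\dots,s_k$ is a path in $S^R_{x_1}$ from $s$ to $t$. This proves that the components are exactly the $\mathcal L$-classes.

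\emph{Right-subinvariance.} It suffices to show that right multiplication by $x$ sends edges to edges or collapses them to a point. If $d(x_1\cdot s,x_1\cdot t)\le R$, then $x_1\cdot s$ and $x_1\cdot t$ lie in the same fiber, and since the action is by $1$-Lipschitz maps, $d(x_1\cdot sx,x_1\cdot tx)\le R$. Thus $sx$ and $tx$ are adjacent or equal, and any path of length $n$ from $s$ to $t$ maps to a path of length at most $n$ from $sx$ to $tx$.

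\emph{Properness.} This is the main step. Adjacency of $s$ and $t$ means $\dom{s}=\dom{t}$ and $d(x_1\cdot s,x_1\cdot t)\le R$. Writing $u=ts^{-1}$, we have $t=us$ because $s^{-1}s=\dom{t}$. As computed in the proof of \Cref{thm:MilnorSchwarz}, $d(x_1\cdot u,x_1\cdot\dom{u})\le d(x_1\cdot t,x_1\cdot s)\le R$. Properness of the action therefore gives $u=ce$ with $c\in\mathcal C(x_1,R)$ and $e\in E(S)$. Then $t=ces$, and since $(es,s)\in\mathcal L$ (by \Cref{rem:order_under_products} together with $t\,\mathcal L\, s$) and $e$ is idempotent, \Cref{lem:edge_in_pairs} yields $es=s$, hence $t=cs$. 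For distance at most $r$, we iterate along a path of at most $\lfloor r\rfloor$ edges and take
\[
F_r=\bigcup_{j\le \lfloor r\rfloor}\mathcal C(x_1,R)^{j}.
\]
The delicate point is to check that $(es,s)\in\mathcal L$ at each step of the path. This holds because $\dom{t}\le\dom{es}\le\dom{s}=\dom{t}$. The construction in fact gives uniform properness with $F_1=\mathcal C(x_1,R)$.
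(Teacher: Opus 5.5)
Your proof is correct and follows essentially the same route as the paper: the same choice $R_0=2T+1$, the same geodesic-subdivision argument for the components, the same $1$-Lipschitz argument for right-subinvariance, and the same key step. That step uses properness of the action to write $ts^{-1}\in\mathcal C(x_1,R)E(S)$, then absorbs the idempotent via $\mathcal L$-equivalence, giving uniform properness with $F_1=\mathcal C(x_1,R)$. The only differences are cosmetic: you write $t=cs$ where the paper writes $s=mt$, and you spell out $F_r$ explicitly.
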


\begin{proof}
Let $x_1 \in X_1$ be the element fulfilling the coboundedness condition. 
We check one requirement at a time.
\begin{itemize}
    \item \emph{Components}: Recall that, since $x_1\in X_1$ and the $S$-action on the base $E(S)$ of the presheaf is by conjugation, $(s,t)\in\mathcal{L}$ if and only if $x_1\cdot s$ and $x_1\cdot t$ lie in the same fiber $X_e$. Now, if $s$ and $t$ are within finite distance in $S^R_{x_1}$, then clearly $x_1\cdot s$ and $x_1\cdot t$ lie at finite distance in $X$, and therefore belong to the same fiber $X_e$. Conversely, suppose $x_1\cdot s$ and $x_1\cdot t$ lie in the same fiber. As in the proof of \Cref{thm:MilnorSchwarz}, if $T$ is the coboundedness constant from \Cref{defn:proper_and_cobounded}, one can use any geodesic between $x_1\cdot s$ and $x_1\cdot t$ to construct a finite sequence $s=s_0,\ldots, s_k=t$ such that $d(x_1\cdot s_i, x_1\cdot s_{i+1})\leq R_0\coloneq 2T+1$ for all $i$; therefore, if $R\ge R_0$, $s$ and $t$ lie in the same component of $S^R_{x_1}$.

    \item \emph{Uniform discreteness}: this is simply because $S^R_{x_1}$ is a graph with edges of length $1$, so any two distinct points lie at distance at least $1$.
    \item \emph{Uniform properness}: The proof will resemble the arguments from \Cref{thm:MilnorSchwarz}. Since $d^R_{x_1}$ is a path metric, it is enough to prove that there exists a finite subset $F_1\subseteq S$ such that, if $d^R_{x_1}(s,t)\le 1$, then $s\in F_1 t$. 
    \\
    We claim that we can actually set $F_1=\mathcal{C}(R,x_1)$, as in the definition of properness. Indeed, by definition $d(x_1\cdot s, x_1\cdot t)\le R$, and therefore $d(x_1\cdot st^{-1}, x_1\cdot tt^{-1})\le R$. Notice that 
    \[
    \dom{st^{-1}}=ts^{-1}st^{-1}=t\dom{s}t^{-1}=t\dom{t}t^{-1}=tt^{-1},
    \]
    where we used that $\dom{s}=\dom{t}$ as their $d^R_{x_1}$-distance is finite. Hence $r\coloneq st^{-1}$ satisfies $d(x_1\cdot r, x_1\cdot \dom{r})\le R$, so $r\in F_1 E(S)$. Summing up what we did so far, we got $$s=s\dom{s}=s\dom{t}=st^{-1}t=rt=met,$$ where $m\in F_1$ and $e\in E(S)$. 
    \\
    Finally, from $s=met$ we get that $\dom{s}\le \dom{et}\le \dom{t}$ by \Cref{rem:order_under_products}, and since $\dom{s}=\dom{t}$ we get that $\dom{et}=\dom{t}$. In turn 
    $$t=t\dom{t}=t\dom{et}=tt^{-1}et=et.$$
    This shows that $s=met=mt$, so $s\in F_1 t$, as required. 
    \item \emph{Right-subinvariance}: Since $S$ acts on $X$ on the right by $1$-Lipschitz maps, the action preserves pairs of points within distance $R$, and therefore maps edges of $S^R_{x_1}$ to edges.\qedhere 
\end{itemize}
\end{proof}

\noindent As a consequence of \Cref{prop:qfg_iff_unifproper}, we immediately get:
\begin{cor}
    Any $S$ as in \Cref{prop:action_gives_metric} is quasi-finitely generated.
\end{cor}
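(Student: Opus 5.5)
The corollary follows by feeding \Cref{prop:action_gives_metric} into \Cref{prop:qfg_iff_unifproper}. Given an inverse monoid $S$ acting properly and coboundedly on a presheaf of geodesic metric spaces $(X,E(S),p)$, \Cref{prop:action_gives_metric} provides $x_1\in X_1$ and $R_0\ge 0$. Fixing any $R\ge R_0$, for instance $R=R_0=2T+1$ with $T$ the coboundedness constant, we obtain the extended metric $d^R_{x_1}$ on $S$. It is uniformly discrete and right-subinvariant, and its components are exactly the $\mathcal L$-classes.

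The one point to watch is that \Cref{prop:qfg_iff_unifproper} needs \emph{uniform} properness, while the statement of \Cref{prop:action_gives_metric} only says ``proper''. The bullet in its proof actually establishes uniform properness. Since $d^R_{x_1}$ is the path metric of the graph $S^R_{x_1}$, we have $d^R_{x_1}(x,y)\le r$ with $x\neq y$ exactly when there is an edge path $x=z_0,z_1,\ldots,z_k=y$ with $k\le\lceil r\rceil$. The proof shows that adjacent vertices satisfy $z_{i+1}\in F_1 z_i$ for the finite set $F_1=\mathcal C(x_1,R)$. Composing along the path gives $y\in (F_1)^{\le\lceil r\rceil}x$, which is exactly the uniform properness condition.

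With all hypotheses of \eqref{item:unifproper} in \Cref{prop:qfg_iff_unifproper} verified, the implication \eqref{item:unifproper}$\Rightarrow$\eqref{item:qfg} shows that $S$ is quasi-finitely generated; more precisely, $F_1$ itself is a finite quasi-generating set. Replacing $F_1$ by $F_1\cup F_1^{-1}$ makes it symmetric, as the paper's convention requires. The only real obstacle is this proper versus uniformly proper gap, and it is closed by the path-metric argument above.
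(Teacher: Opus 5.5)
Your proposal is correct and follows the paper's route: the corollary combines \Cref{prop:action_gives_metric} with the implication \eqref{item:unifproper}$\Rightarrow$\eqref{item:qfg} of \Cref{prop:qfg_iff_unifproper}, and you rightly note that the proof of the former actually yields uniform properness, via the path-metric reduction to adjacent vertices that you spell out. One nitpick: your ``exactly when'' should be a one-way implication, since a distance $\le r$ yields a path of length $\le\lfloor r\rfloor\le\lceil r\rceil$ but not conversely, and this direction is all the argument uses.
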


\noindent Furthermore, the coarse equivalence from \Cref{thm:cms} can be improved to a quasi-isometry:
\begin{cor}
    The metric $d^R_{x_1}$ is quasi-isometric to the Cayley metric with respect to any finite quasi-generating set.
\end{cor}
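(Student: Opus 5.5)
The plan is to compare both metrics through the presheaf $X$. The text announces a lemma (\Cref{lem:dR_qi_X}) stating that $d^R_{x_1}$ is quasi-isometric to $X$; I will prove that step directly. Then I compose with \Cref{thm:MilnorSchwarz}\eqref{item:MS:qi}. Fix $x_1$ and $R\ge R_0=2T+1$ as in \Cref{prop:action_gives_metric}, and let $\mathcal M$ be any finite quasi-generating set. By \Cref{lem:compare_cayley_metrics} it suffices to treat one finite quasi-generating set. The natural candidate is $F_1=\mathcal C(x_1,R)$, which by the proof of \Cref{prop:action_gives_metric} and \Cref{prop:qfg_iff_unifproper} is a finite quasi-generating set.

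\textbf{Step 1: $d^R_{x_1}$ versus the orbit metric.} I will show that the identity $(S,d^R_{x_1})\to(S,d_X)$ is a quasi-isometry, where $d_X(s,t)=d(x_1\cdot s,x_1\cdot t)$. Both sides are infinite exactly off the $\mathcal L$-classes, by the \emph{Components} item of \Cref{prop:action_gives_metric}.

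The upper bound $d_X\le R\,d^R_{x_1}$ is immediate from the triangle inequality along an edge path.

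For the lower bound, take $s,t$ with $\dom{s}=\dom{t}$. Following the geodesic argument in \Cref{thm:MilnorSchwarz}, pick points $p_0,\dots,p_k$ on a geodesic from $x_1\cdot s$ to $x_1\cdot t$ with consecutive distance at most $1$ and $k\le d_X(s,t)+2$. Choose $s_i$ with $d(p_i,x_1\cdot s_i)\le T$, taking $s_0=s$ and $s_k=t$. Consecutive $s_i$ are then $d_X$-within $2T+1\le R$, so they are adjacent in $S^R_{x_1}$. Hence $d^R_{x_1}(s,t)\le d_X(s,t)+2$. This makes the identity a quasi-isometry (in fact bijective).

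\textbf{Step 2: compose.} By \Cref{thm:MilnorSchwarz}\eqref{item:MS:qi}, the identity $(S,d_{\mathcal M})\to(S,d_X)$ is a quasi-isometric embedding with respect to this same basepoint $x_1$. One should double-check that the $x_1$ there can be taken to be the coboundedness basepoint; it can, since the proof picks exactly that point. Composing the identity $(S,d_{\mathcal M})\to (S,d_X)$ with the inverse of the quasi-isometry from Step 1 shows that the identity $(S,d_{\mathcal M})\to(S,d^R_{x_1})$ is a quasi-isometry, and it is bijective. Finally, \Cref{lem:compare_cayley_metrics} transfers this to any other finite quasi-generating set.

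Alternatively, one can argue directly. Uniform properness gives $d_{F_1}\le d^R_{x_1}$, because an edge $s\sim t$ yields $s\in F_1t$. Conversely, each $m\in F_1$ satisfies $d(x_1\cdot m,x_1\cdot\dom m)\le R$. The Lipschitz computation at the end of \Cref{thm:MilnorSchwarz} then shows that an $F_1$-edge between $\mathcal L$-related elements has endpoints at $d_X$-distance at most $R$, so it is an edge of $S^R_{x_1}$. This gives $d^R_{x_1}\le d_{F_1}$, which is even an isometry.

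The main obstacle is the bookkeeping that all metrics have the same components and that the basepoints agree, so that the finite-distance cases match. This is handled by the \emph{Components} argument already in \Cref{prop:action_gives_metric}.
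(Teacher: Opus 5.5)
Your main route (Steps 1--2) is correct, but it is not the paper's. The paper argues abstractly. The uniqueness clause of \Cref{thm:cms} makes $d^R_{x_1}$ and $d_{\mathcal M}$ coarsely equivalent, and coarsely equivalent quasi-geodesic extended metric spaces are quasi-isometric. You instead go through $X$. Your Step 1 is essentially \Cref{lem:dR_qi_X} for the pulled-back pseudometric $d_X$. You do it carefully: you approximate the subdivision points of the geodesic by orbit points using the $2T$ slack and $R\ge 2T+1$, a step the paper's proof of that lemma glosses over. Step 2 then composes with \Cref{thm:MilnorSchwarz}\eqref{item:MS:qi} and \Cref{lem:compare_cayley_metrics}. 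Your argument is elementary and gives explicit constants, with no appeal to \Cref{thm:cms} or to the coarse-to-quasi-isometric upgrade. Its cost is that it uses \Cref{thm:MilnorSchwarz}. That is legitimate, since the previous section proves that theorem independently. However, in the paper this corollary, together with \Cref{lem:dR_qi_X}, is meant to \emph{give} a second proof of \Cref{thm:MilnorSchwarz}, and with your proof that second proof would become circular.

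Your ``direct'' alternative rests on a false claim. An element $m\in F_1=\mathcal C(x_1,R)$ need not have displacement $d(x_1\cdot m,x_1\cdot\dom{m})\le R$. Properness only places every element of displacement at most $R$ \emph{below} some element of $\mathcal C$, and displacement can only decrease under restriction. Here is an example:
\begin{itemize}
\item take $E=\{e_1<e_2<\cdots\}\cup\{1\}$ and $S=E\times\mathbb Z$;
\item let all fibers equal $\mathbb R$, with identity restriction maps, the standard metric on $X_1$ and half of it on each $X_{e_n}$;
\item let $\mathbb Z$ act by translations, and set $x_1=0$, $T=1$, $R=3$.
\end{itemize}
Each $(e_n,4)$ has displacement $2$, so finiteness forces $(1,4)\in\mathcal C$, whose displacement is $4$. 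Thus $d_{F_1}((1,0),(1,4))=1$ while $d^3_{x_1}((1,0),(1,4))=2$. In fact no finite quasi-generating set makes $d^3_{x_1}$ a Cayley metric here.

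The fix is to put $R'=\max_{m\in F_1}d(x_1\cdot m,x_1\cdot\dom{m})<\infty$. The computation at the end of the proof of \Cref{thm:MilnorSchwarz} bounds $d_X$ across any $F_1$-edge by $R'$, and your Step 1 then yields $d^R_{x_1}\le (R'+2)\,d_{F_1}$. Together with $d_{F_1}\le d^R_{x_1}$, this gives a bi-Lipschitz equivalence, not an isometry. This repaired version avoids both \Cref{thm:cms} and \Cref{thm:MilnorSchwarz}, so it is the one compatible with the paper's second proof.
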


\begin{proof}
    By \Cref{thm:cms}, the two metrics are coarsely equivalent. Moreover, Cayley metrics with respect to finite quasi-generating sets and path metrics on graphs are both quasi-geodesic, and coarsely equivalent quasi-geodesic (extended) metric spaces are actually quasi-isometric: see e.g. \cite[Section 0.2.D]{gromov_asymp_inv}.
\end{proof}

\noindent Hence our \Cref{thm:MilnorSchwarz} follows from the next observation:

\begin{lem}\label{lem:dR_qi_X}
     There exists $T\ge 0$ such that, if $R\ge T$, then $(S,d^R_{x_1})$ is quasi-isometric to $X$.
\end{lem}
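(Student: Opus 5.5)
We will show that the orbit map $\phi\colon (S,d^R_{x_1})\to X$, $s\mapsto x_1\cdot s$, is a quasi-isometry. Here $x_1\in X_1$ and $T$ are the basepoint and constant given by coboundedness, i.e.\ $B(x_1,T)\cdot S=X$. For the threshold we take $2T+1$ in place of $T$: this is the constant $R_0$ from \Cref{prop:action_gives_metric}, so $T$ in the statement should be read as this $R_0$. We fix any $R\ge 2T+1$.

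\emph{Coarse surjectivity.} Every $y\in X$ can be written as $y=z\cdot s$ with $d(z,x_1)\le T$. Since the action is by $1$-Lipschitz maps, $d(y,x_1\cdot s)\le T$. Hence $\phi$ is $T$-coarsely surjective.

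\emph{Finite distances agree.} We next check that infinite distances correspond to each other. By the Components item of \Cref{prop:action_gives_metric}, we have $d^R_{x_1}(s,t)<\infty$ if and only if $(s,t)\in\mathcal L$, if and only if $x_1\cdot s$ and $x_1\cdot t$ lie in the same fiber. So it suffices to compare the two metrics for $s,t$ with $\dom{s}=\dom{t}$.

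\emph{Upper bound.} If $d^R_{x_1}(s,t)=n$, there is a path $s=s_0,\dots,s_n=t$ in $S^R_{x_1}$. Each edge satisfies $d(x_1\cdot s_i,x_1\cdot s_{i+1})\le R$ by definition of adjacency. The triangle inequality then gives
\[d(x_1\cdot s,x_1\cdot t)\le R\, d^R_{x_1}(s,t).\]

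\emph{Lower bound.} We repeat the subdivision argument from the proof of \Cref{thm:MilnorSchwarz}. Let $D=d(x_1\cdot s,x_1\cdot t)$ and take a geodesic between the two points in the fiber $X_{\dom{s}}$. Along it we choose points $p_0=x_1\cdot s,\dots,p_k=x_1\cdot t$ with consecutive distances at most $1$ and $k\le D+2$. By coboundedness, for each $i$ we choose $s_i\in S$ with $d(p_i,x_1\cdot s_i)\le T$, taking $s_0=s$ and $s_k=t$. Then
\[d(x_1\cdot s_i,x_1\cdot s_{i+1})\le 2T+1\le R,\]
so consecutive $s_i$ are adjacent in $S^R_{x_1}$. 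Adjacency requires nothing beyond this distance bound, since points at finite distance automatically lie in the same fiber and hence $\dom{s_i}=\dom{s}$ for all $i$. Therefore
\[d^R_{x_1}(s,t)\le k\le d(x_1\cdot s,x_1\cdot t)+2.\]

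Combining the upper and lower bounds with coarse surjectivity shows that $\phi$ is a quasi-isometry. It is also order-preserving on fibers, because $x_1\cdot s\,\dom{t}$ is the restriction of $x_1\cdot s$. The only real obstacle is bookkeeping the threshold: it must be at least $2T+1$, which exceeds the coboundedness constant, hence the reading of $T$ above.
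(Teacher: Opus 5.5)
Your proof is correct and follows the same route as the paper: the orbit map $s\mapsto x_1\cdot s$ is $R$-Lipschitz, a subdivided geodesic gives the reverse linear bound, and coboundedness gives coarse surjectivity. Your lower bound is in fact more careful than the paper's, which subdivides the geodesic into pieces of length at most $R$ without replacing the subdivision points by orbit points; that replacement adds up to $2T$ to each step, which is why the threshold must be $2T+1$ (as in \Cref{prop:action_gives_metric}) and not the coboundedness constant itself.
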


\begin{proof}
Consider the map $\phi\colon S\to X$ mapping $s$ to $x_1\cdot s$. This map is $R$-Lipschitz since, if $s,t\in S$ are adjacent in $S^R_{x_1}$ then $d(x_1\cdot s, x_1\cdot t)\le R$ by construction. Furthermore $d^R_{x_1}(s,t)\le d(x_1\cdot s, x_1\cdot t)/R+1$, since any geodesic $\gamma$ connecting $x_1\cdot s$ to $x_1\cdot t$ can be subdivided into at most $d(x_1\cdot s, x_1\cdot t)/R+1$ subsegments of length at most $R$. Finally, coarse surjectivity follows from coboundedness: if $R$ is greater than the coboundedness constant $T$, then for every $y\in X$ there exists $s\in S$ such that $y\in B(x_1,T)\cdot s$, and therefore $d(y,x_1\cdot s)\le R$ as the action is by $1$-Lipschitz maps. 
\end{proof}

\bibliography{biblio}
\bibliographystyle{alpha}
\end{document}